\documentclass[12pt,reqno]{amsart}

\usepackage{verbatim}
\usepackage{amsfonts}

\newtheorem{thm}{Theorem}
\newtheorem{prop}[thm]{Proposition}
\newtheorem{cor}[thm]{Corollary}
\newtheorem{lem}[thm]{Lemma}
\theoremstyle{definition}

\newtheorem*{ex}{Example}
\newtheorem{citethm}{Theorem}

\providecommand{\RR}{\mathbb{R}}
\providecommand{\CC}{\mathbb{C}}

\providecommand{\NN}{\mathbb{N}}

\renewcommand{\L}{\mathcal{L}}

\providecommand{\eps}{\epsilon}
\DeclareMathOperator{\extr}{ex}

\DeclareMathOperator{\id}{id}

\providecommand{\eps}{\varepsilon}
\providecommand{\rx}{\RR[x]}
\newcommand{\sos}[1]{\Sigma^2(#1)}
\begin{document}

\title[Operator-theoretic Positivstellens\" atze]{Archimedean operator-theoretic Positivstellens\" atze}

\author{J. Cimpri\v c}

\keywords{real algebraic geometry, operator algebras, moment problems}

\subjclass[2010]{14P, 13J30, 47A56}

\date{November 22nd, 2010; revised January 16th, 2011}

\address{Jaka Cimpri\v c, University of Ljubljana, Faculty of Math. and Phys.,
Dept. of Math., Jadranska 19, SI-1000 Ljubljana, Slovenija. 
E-mail: cimpric@fmf.uni-lj.si. www page: http://www.fmf.uni-lj.si/ $\!\!\sim$cimpric.}

\begin{abstract}
We prove a general archimedean positivstellensatz for hermitian operator-valued polynomials
and show that it implies the multivariate Fejer-Riesz Theorem of Dritschel-Rovnyak and 
positivstellens\" atze of Ambrozie-Vasilescu and Scherer-Hol.
We also obtain several generalizations of these and related results. 
The proof of the main result depends on an extension of the abstract archimedean positivstellensatz 
for $\ast$-algebras that is interesting in its own right.
\end{abstract}

\maketitle

\thispagestyle{empty}

\section{Introduction}

We fix $d \in \NN$ and write $\rx:= \RR[x_1,\ldots,x_d].$ In real algebraic geometry, 
a \textit{positivstellensatz} is a theorem which for given polynomials $p_1,\ldots,p_m \in \rx$ characterizes all 
polynomials $p \in \rx$ which satisfy $p_1(a)\ge 0,\ldots,p_m(a) \ge 0 \Rightarrow p(a)>0$
for every point $a \in \RR^d$. A nice survey of them is \cite{mm}.
The name \textit{archimedean positivstellensatz} is reserved for the following result of
Putinar \cite[Lemma 4.1]{pu}:

\begin{citethm}
\label{putinar}
Let $S=\{p_1,\ldots,p_m\}$ be a finite subset of  $\rx$. If the set 
$M_S := \{c_0+\sum_{i=1}^m c_i p_i \mid c_0,\ldots,c_m$ are
sums of squares of polynomials from $\rx\}$ 
contains an element $g$ such that the set  $\{x \in \RR^d \mid g(x) \ge 0\}$ is compact,
then for every $p \in \rx$ the following are equivalent:
\begin{enumerate}
\item $p(x) > 0$ on $K_S:=\{x \in \RR^d \mid p_1(x)\ge 0,\ldots,p_m(x)\ge 0\}$.
\item There exists an $\eps >0$ such that $p-\eps \in M_S$.
\end{enumerate}
\end{citethm}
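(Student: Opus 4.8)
The plan is to deduce everything from the \emph{abstract archimedean Positivstellensatz} (the Kadison--Dubois / Jacobi representation theorem), which for a commutative unital $\RR$-algebra $A$ and an \emph{archimedean} quadratic module $M \subseteq A$ asserts: if $\phi(a) > 0$ for every $\RR$-algebra homomorphism $\phi \colon A \to \RR$ with $\phi(M) \subseteq [0,\infty)$, then $a \in M$. Accordingly the argument splits into three parts: the trivial implication $(2) \Rightarrow (1)$; the verification that the compactness hypothesis forces $M_S$ to be archimedean; and the translation of the representation theorem into the geometry of $K_S$.

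For $(2) \Rightarrow (1)$ I would simply evaluate: if $p - \eps \in M_S$ with $\eps > 0$, write $p - \eps = c_0 + \sum_{i=1}^m c_i p_i$ with each $c_i$ a sum of squares; at any $x \in K_S$ all $p_i(x) \ge 0$ and all $c_i(x) \ge 0$, so $p(x) - \eps \ge 0$, i.e. $p(x) \ge \eps > 0$.

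The crux is showing that $g \in M_S$ with $\{g \ge 0\}$ compact forces $M_S$ to be archimedean, equivalently $N - \sum_{i=1}^d x_i^2 \in M_S$ for some $N$. Note first that $g \ge 0$ on $K_S$, so $K_S \subseteq \{g \ge 0\}$ is compact; but compactness of $K_S$ alone is famously insufficient, and it is the single element $g$ with compact positivity set that drives the argument. I would proceed by contraposition over real closed field extensions. If $M_S$ were not archimedean, then (since the bounded elements form a subring containing $\RR$, and $N - x_i^2 \in M_S$ would bound each $x_i$) the element $\sum x_i^2$ is unbounded modulo $M_S$, meaning $N - \sum x_i^2 \notin M_S$ for every $N$. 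By the representation of quadratic modules over real closed fields (the Positivstellensatz machinery of the real spectrum) one then obtains a real closed field $R \supseteq \RR$ and a point $\xi \in R^d$ with $p_i(\xi) \ge 0$ for all $i$ but $\sum \xi_i^2$ infinitely large; since $g \in M_S$ this also gives $g(\xi) \ge 0$. On the other hand boundedness of $\{g \ge 0\}$ in $\RR^d$ is the first-order sentence $\forall x\,\bigl(g(x) \ge 0 \Rightarrow \sum x_i^2 \le N_0\bigr)$, which transfers to $R$ by Tarski's principle and, applied to $\xi$, contradicts $\sum \xi_i^2$ being infinite. I expect this step to be the main obstacle, both because producing the infinite point $\xi$ requires the quadratic-module form of the Positivstellensatz and because one must check that $g \ge 0$ persists under the extension.

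Finally, with $M_S$ archimedean I apply the representation theorem to $A = \rx$ and $M = M_S$. Every $\RR$-algebra homomorphism $\rx \to \RR$ is evaluation $\mathrm{ev}_a$ at some $a \in \RR^d$, and $\mathrm{ev}_a(M_S) \subseteq [0,\infty)$ holds exactly when $p_1(a) \ge 0, \ldots, p_m(a) \ge 0$ (the sum-of-squares coefficients evaluate to nonnegative numbers automatically), that is, exactly when $a \in K_S$. Hence the hypothesis of the representation theorem for $p$ is precisely condition $(1)$. To extract the uniform $\eps$ of condition $(2)$, observe that the archimedean bounds confine the admissible $a$ to a box $\prod_i [-N_i, N_i]$, so the set of such homomorphisms is compact in the topology of pointwise convergence and the continuous function $a \mapsto p(a)$ attains a positive minimum $2\eps$ on $K_S$; applying the representation theorem to $p - \eps$, whose evaluations on $K_S$ are still $\ge \eps > 0$, yields $p - \eps \in M_S$, which is $(2)$.
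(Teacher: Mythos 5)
The implication $(2)\Rightarrow(1)$ and your final step (deducing the equivalence from the Jacobi/Kadison--Dubois representation theorem once $M_S$ is known to be archimedean, with the uniform $\eps$ extracted by compactness of $K_S$) are correct, and they match how the paper positions Theorem~\ref{putinar}: it is cited from Putinar and subsumed in Theorem~\ref{jacobi}, whose last sentence (archimedean iff $M$ contains a $g$ with compact positivity set) is exactly your middle step and is there attributed to Jacobi's representation theorem \emph{together with Schm\"udgen's positivstellensatz}. The genuine gap is in your contraposition. From non-archimedeanity of the \emph{quadratic module} $M_S$ you cannot produce a real closed field $R$ and a point $\xi\in R^d$ with $p_1(\xi)\ge 0,\dots,p_m(\xi)\ge 0$ and $\sum\xi_i^2$ infinite: such a $\xi$ is the same thing as a non-archimedean \emph{ordering} of $\rx$ containing $M_S$ (equivalently, containing the preordering $T_S$ generated by $p_1,\dots,p_m$), whereas the Zorn/separation argument for quadratic modules only yields maximal proper quadratic modules, i.e.\ \emph{semiorderings}, which need not be closed under multiplication and do not correspond to points over real closed fields, so Tarski transfer is unavailable for them. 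Worse, under your own hypotheses the desired $\xi$ provably does not exist: $K_S\subseteq\{g\ge 0\}$ is compact, so $N-\sum x_i^2>0$ on $K_S$ for some $N$, and Stengle's positivstellensatz gives $\sigma\,(N-\sum x_i^2)=1+\tau$ with $\sigma,\tau\in T_S$, which forces $N-\sum x_i^2>0$ in \emph{every} ordering containing $T_S$. And the lemma your contraposition silently assumes --- ``$M_S$ not archimedean $\Rightarrow$ some ordering containing $M_S$ is non-archimedean'' --- is false in general: by Jacobi--Prestel there are finite $S$ with $K_S$ compact (hence every ordering containing $T_S$ archimedean) but $M_S$ not archimedean, the obstruction being a semiordering arising from a valuation, invisible to real closed field points.

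The repair is to use $g$ concretely rather than the abstract non-archimedeanity of $M_S$. Let $T(g)=\sos{\rx}+\sos{\rx}\,g$ be the preordering generated by the single polynomial $g$; since $g\in M_S$ and $M_S$ is a quadratic module, $T(g)\subseteq M_S$. Because $\{x\mid g(x)\ge 0\}$ is compact, Schm\"udgen's positivstellensatz applied to $T(g)$ yields $N-\sum x_i^2\in T(g)\subseteq M_S$ for suitable $N$, and archimedeanity of $M_S$ then follows from the identity $l\pm x_i=\frac{1}{2l}\big((x_i\pm l)^2+(l^2-x_i^2)\big)$ together with the fact that the bounded elements form a subring containing $\RR$ --- the part of your sketch that was correct. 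Note that even in this single-generator setting Tarski transfer alone does not suffice: transfer (via Stengle) only produces the certificate with the denominator $\sigma$, and removing it --- passing from $\sigma(N-\sum x_i^2)=1+\tau$ to $N'-\sum x_i^2\in T(g)$ --- is W\"ormann's nontrivial algebraic argument (or Schm\"udgen's original functional-analytic one). This detour through the preordering generated by $g$ alone is precisely why Putinar's hypothesis demands a single $g\in M_S$ with compact positivity set rather than mere compactness of $K_S$.
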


An important corollary of Theorem \ref{putinar} is the following theorem of Putinar and Vasilescu \cite[Corollary 4.4]{pv}.
The case $S=\emptyset$ was first done by Reznick \cite[Theorem 3.15]{rez}, see also \cite[Theorem 4.13]{bw2}.

\begin{citethm}
\label{putinar-vasilescu}
Notation as in Theorem \ref{putinar}.
If $p_1,\ldots,p_m$ and $p$ are homogeneous of even degree
and if  $p(x) > 0$ for every nonzero $x \in K_S$,
then there exists $\theta \in \NN$ such that $(x_1^2+\ldots+x_d^2)^\theta p \in M_S$.
\end{citethm}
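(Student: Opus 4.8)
The plan is to deduce the statement from Putinar's Theorem~\ref{putinar} by passing to the unit sphere, where homogeneity turns the cone $K_S$ into a compact set, and then to recover a \emph{homogeneous} representation by a rehomogenization argument. Throughout write $q:=x_1^2+\cdots+x_d^2$ and put $2e=\deg p$, $2k_i=\deg p_i$.

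First I would set $S':=\{p_1,\ldots,p_m,\,q-1,\,-(q-1)\}$ and consider the quadratic module $M_{S'}$. Since every $h\in\rx$ is a difference of two squares, $h=(\frac{h+1}{2})^2-(\frac{h-1}{2})^2$, the generators $\pm(q-1)$ contribute the whole ideal $(q-1)\rx$, so that $M_{S'}=M_S+(q-1)\rx$ and $K_{S'}=K_S\cap\{q=1\}$. For $N\ge 1$ the element $N-q=(N-1)-(q-1)$ lies in $M_{S'}$ and $\{x: N-q\ge 0\}$ is a ball, so $M_{S'}$ satisfies the hypothesis of Theorem~\ref{putinar}. Every point of $K_{S'}$ is a \emph{nonzero} point of $K_S$, hence $p>0$ on $K_{S'}$, and Theorem~\ref{putinar} yields a representation
\[
p=\sigma+(q-1)h,\qquad \sigma=c_0+\sum_{i=1}^m c_ip_i\in M_S,\quad h\in\rx,
\]
with $c_0,\ldots,c_m$ sums of squares.

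Next I would rehomogenize this affine identity. Substituting $x\mapsto tx$ and comparing coefficients of $t^k$ gives, for the even-degree homogeneous parts, the recursion $h_{[k]}=\sigma_{[k]}+q\,h_{[k-2]}$ for even $k\ne 2e$, corrected to $h_{[2e]}=\sigma_{[2e]}+q\,h_{[2e-2]}-p$ at $k=2e$. Telescoping this recursion upward and using that the homogeneous parts of $h$ and $\sigma$ vanish in high degree, I expect to obtain, for all sufficiently large $r$,
\[
q^r p=\mathrm{Hom}_q^{2(e+r)}(\sigma_{\mathrm{even}}),
\]
where $\mathrm{Hom}_q^{2D}$ denotes homogenization to degree $2D$ by inserting powers of $q$, and $\sigma_{\mathrm{even}}=(c_0)_{\mathrm{even}}+\sum_i (c_i)_{\mathrm{even}}\,p_i$. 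Taking $\theta=r$, it then remains only to check that the right-hand side lies in $M_S$.

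The hard part will be this last verification, which is a parity bookkeeping: I must show that $\mathrm{Hom}_q^{2D}$ sends the even part of a sum of squares into the sums of squares. Writing a square $g^2$ via the even/odd split $g=u+v$ gives $(g^2)_{\mathrm{even}}=u^2+v^2$ with $u$ even and $v$ odd, and a direct computation of $q^{D}g(x/\sqrt q)^2$ shows that each of $\mathrm{Hom}_q^{2D}(u^2)$ and $\mathrm{Hom}_q^{2D}(v^2)$ is either a perfect square or $q$ times a perfect square, according to the parity of $D$; since $q$ is itself a sum of squares, both are sums of squares. Because each $p_i$ is homogeneous one has $\mathrm{Hom}_q^{2D}((c_i)_{\mathrm{even}}p_i)=\mathrm{Hom}_q^{2(D-k_i)}((c_i)_{\mathrm{even}})\,p_i$, so the displayed right-hand side is a sum of squares plus a combination $\sum_i(\text{sum of squares})\,p_i$, i.e.\ it lies in $M_S$. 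This gives $q^\theta p\in M_S$, as required.
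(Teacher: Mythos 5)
Your proof is correct, and it is essentially the paper's own approach: the paper states this result as a corollary of Theorem~\ref{putinar} without spelling out details, but its proof of Theorem~\ref{ambrozie-vasilescu} runs exactly your route --- adjoin $1-\Vert x \Vert^2$ and $\Vert x \Vert^2-1$ to the constraints to make the quadratic module archimedean on the sphere, apply the archimedean positivstellensatz there, then rehomogenize by substituting $x \mapsto x/\Vert x \Vert$, multiplying by a large power of $\Vert x \Vert^2$, and eliminating odd powers of $\Vert x \Vert$ by replacing $\Vert x \Vert$ with $-\Vert x \Vert$ and adding the two equations. Your telescoping recursion and the $\mathrm{Hom}_q$ parity bookkeeping (even/odd split $g=u+v$, $(g^2)_{\mathrm{even}}=u^2+v^2$, with $q\cdot(\mathrm{square})$ a sum of squares) is just an explicit algebraic rendering of that same substitution-and-symmetrization trick, and every step checks out.
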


Another important corollary of Theorem \ref{putinar} 
(take $S=\{1-x_1^2-y_1^2, x_1^2+y_1^2-1, \ldots, 1-x_d^2-y_d^2,x_d^2+y_d^2-1\} \subseteq \RR[x_1,y_1,\ldots,x_d,y_d]$)
is the following multivariate Fejer-Riesz theorem. 

\begin{citethm}
\label{fejer-riesz}
Every element of $\RR[\cos \phi_1, \sin \phi_1,\ldots,\cos \phi_d,\sin \phi_d]$
which is strictly positive for every $\phi_1,\ldots,\phi_d$ is equal to a sum
of squares of elements from 
$\RR[\cos \phi_1, \sin \phi_1,\ldots,\cos \phi_d,\sin \phi_d]$.
\end{citethm}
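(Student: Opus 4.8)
The plan is to deduce this directly from Theorem~\ref{putinar} via the substitution indicated in the parenthetical remark. I would work in $\RR[x_1,y_1,\ldots,x_d,y_d]$ and let $\pi$ denote the surjective $\RR$-algebra homomorphism sending $x_i\mapsto\cos\phi_i$ and $y_i\mapsto\sin\phi_i$ onto the trigonometric polynomial ring $T:=\RR[\cos\phi_1,\sin\phi_1,\ldots,\cos\phi_d,\sin\phi_d]$. With $S$ as indicated, the set $K_S$ is cut out by $1-x_i^2-y_i^2\ge 0$ and $x_i^2+y_i^2-1\ge 0$ for each $i$; the conjunction of the two inequalities forces $x_i^2+y_i^2=1$, so $K_S$ is exactly the $d$-torus $(S^1)^d$, which is precisely the image of $(\cos\phi_1,\sin\phi_1,\ldots,\cos\phi_d,\sin\phi_d)$ as the $\phi_i$ range over $\RR$.

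First I would verify the archimedean hypothesis of Theorem~\ref{putinar}. Taking $g:=\sum_{i=1}^d(1-x_i^2-y_i^2)$, which lies in $M_S$ with all sum-of-squares coefficients equal to $1$, the set $\{g\ge 0\}=\{\sum_i(x_i^2+y_i^2)\le d\}$ is a closed ball in $\RR^{2d}$, hence compact; so Theorem~\ref{putinar} applies to this $S$.

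Next, given $t\in T$ with $t>0$ everywhere, I would choose any lift $P\in\RR[x_1,\ldots,y_d]$ with $\pi(P)=t$. Since every point of $K_S$ has the form $(\cos\phi_1,\ldots,\sin\phi_d)$ and $t$ is strictly positive, $P>0$ on $K_S$. Theorem~\ref{putinar} then yields $\eps>0$ and a representation
\[
P-\eps=\sigma_0+\sum_{i=1}^d\sigma_i(1-x_i^2-y_i^2)+\sum_{i=1}^d\tau_i(x_i^2+y_i^2-1),
\]
where $\sigma_0,\sigma_i,\tau_i$ are sums of squares in $\RR[x_1,\ldots,y_d]$.

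The final step, which carries the whole argument, is to apply $\pi$. Since $\pi(1-x_i^2-y_i^2)=\pi(x_i^2+y_i^2-1)=0$, every term involving a generator of $S$ vanishes, leaving $t-\eps=\pi(\sigma_0)$. As $\pi$ is a ring homomorphism it maps a sum of squares to a sum of squares, so $\pi(\sigma_0)\in\sos{T}$; adding back $\eps=(\sqrt\eps)^2$ gives $t=\pi(\sigma_0)+(\sqrt\eps)^2\in\sos{T}$, as desired. I do not anticipate a genuine obstacle: the only subtle point is the deliberate inclusion of \emph{both} $1-x_i^2-y_i^2$ and $x_i^2+y_i^2-1$ in $S$, which is exactly what collapses $K_S$ onto the torus and, at the same time, makes both families of constraint terms die under $\pi$, so that only the square term $\sigma_0$ survives.
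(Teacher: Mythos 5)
Your proof is correct and follows exactly the route the paper intends: the paper's entire justification of Theorem~\ref{fejer-riesz} is the parenthetical instruction to apply Theorem~\ref{putinar} with $S=\{1-x_i^2-y_i^2,\ x_i^2+y_i^2-1 \mid i=1,\ldots,d\}$, and you have supplied precisely the missing details (compactness of $\{g\ge 0\}$ for $g=\sum_i(1-x_i^2-y_i^2)\in M_S$, lifting $t$ along the substitution homomorphism $\pi$, and pushing the Putinar representation through $\pi$ so that both families of constraint terms vanish and $t-\eps$ survives as a sum of squares). There is nothing to correct.
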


Note that Theorem \ref{fejer-riesz} implies neither the classical univariate Fejer-Riesz theorem nor its multivariate extension from
\cite{ns} which both work for nonnegative trigonometric polynomials. 

\medskip

Various generalizations of Theorems \ref{putinar}, \ref{putinar-vasilescu} and \ref{fejer-riesz} have been considered.
Theorem \ref{jacobi} extends Theorems \ref{putinar} and \ref{fejer-riesz} from finite to arbitrary sets $S$ 
and from algebras $\rx$ and $\RR[\cos \phi_1, \sin \phi_1,\ldots,\cos \phi_d,\sin \phi_d]$ to arbitrary algebras of the form $\rx/I$.
It also implies that Theorem \ref{putinar-vasilescu} holds for arbitrary $S$.
It is a special case of Jacobi's representation theorem 
and Schm\" udgen's positivstellensatz,
see \cite[5.7.2 and 6.1.4]{mm}.
Generalizations from sums of squares to sums of even powers and from $\RR$ to subfields of $\RR$ will
not be considered here, see \cite{bw2,jac,mm2}.

\begin{citethm}
\label{jacobi}
Let $R$ be a commutative real algebra and   
$M$ a quadratic module in $R$ (i.e.
$1 \in M \subseteq R$, $M+M \subseteq M$, $r^2M \subseteq M$ for all $r \in R$).
If $M$ is archimedean  (i.e. for every $r \in R$ we have $l  \pm r \in M$ for some real $l>0$)
then for every $p \in R$ the following are equivalent:
\begin{enumerate}
\item $p \in \eps +M$ for some real $\eps>0$,
\item $\phi(p)>0$ for all $\phi \in V_R:=\mathrm{Hom}(R,\RR)$ such that $\phi(M) \ge 0$.
\end{enumerate}
If $R$ is affine then $M$ is archimedean
iff it contains an element $g$ such that the set $\{\phi \in V_R \mid \phi(g) \ge 0\}$ is compact
in the coarsest topology of $V_R$ for which all evaluations $\phi \mapsto \phi(a), a \in R$, are continuous.
\end{citethm}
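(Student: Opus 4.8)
The implication (1) $\Rightarrow$ (2) is immediate: if $p=\eps+m$ with $m\in M$ and $\phi\in V_R$ satisfies $\phi(M)\ge 0$, then $\phi(p)=\eps+\phi(m)\ge\eps>0$. All the content is in (2) $\Rightarrow$ (1), which I would prove by contraposition. First I would record a reformulation of (1) in terms of the ``distance'' function $L(a):=\inf\{\lambda\in\RR : \lambda\cdot 1-a\in M\}$, which is finite and sublinear precisely because $M$ is archimedean; one checks that (1) holds for $p$ exactly when $L(-p)<0$. Thus if (1) fails then $L(-p)\ge 0$, i.e. $p-\eps\notin M$ for every $\eps>0$, and my goal becomes the production of a single $\phi\in V_R$ with $\phi(M)\ge 0$ and $\phi(p)\le 0$.

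To build such a $\phi$ I would pass to the \emph{state space} $S=\{\psi\colon R\to\RR \text{ linear} : \psi(1)=1,\ \psi(M)\ge 0\}$. Because $M$ is archimedean, each $\psi\in S$ is controlled by $L$ on every element, so $S$ sits inside a product of compact intervals and is weak-$*$ compact and convex (Tychonoff plus closedness). The order-unit form of the Hahn--Banach separation theorem gives $\min_{\psi\in S}\psi(p)=\sup\{t\in\RR : p-t\cdot 1\in\overline M\}$, where $\overline M$ is the order-unit closure of $M$; since $p-\eps\notin M$ for all $\eps>0$ one gets $p-t\cdot 1\notin\overline M$ for all $t>0$, hence $\min_{\psi\in S}\psi(p)\le 0$. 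The minimizing set is a nonempty compact convex face of $S$, and I would take an extreme point $\phi$ of that face; being extreme in a face, $\phi$ is extreme in $S$, and it still satisfies $\phi(p)\le 0$.

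The main obstacle, and the one place the quadratic-module hypothesis is genuinely used, is to show that \emph{every extreme point of $S$ is a character}, i.e. lies in $V_R$. Here I would form $N:=\{x\in R : \phi(x^2)=0\}$. Using the Cauchy--Schwarz inequality for the positive semidefinite form $\langle x,y\rangle:=\phi(xy)$, the set $N$ is a linear subspace; and it is in fact an ideal: given $x\in N$ and $r\in R$, pick $l$ with $l\cdot 1-r^2\in M$ (archimedean), note $(l\cdot 1-r^2)\,x^2\in M$ because $x^2$ is a square and $M$ is closed under multiplication by squares, and deduce $0\le\phi((rx)^2)=\phi(r^2x^2)\le l\,\phi(x^2)=0$. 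This use of $s^2M\subseteq M$ is exactly what replaces the product-closedness available for preorderings. Passing to $R/N$ yields a faithful state and, via the GNS construction, a real Hilbert space $H$ on which each $r$ acts as a bounded \emph{symmetric} multiplication operator $\pi(r)$ with a cyclic vector $\hat 1$ representing $\phi$. Extremality of $\phi$ makes the representation irreducible; since the $\pi(r)$ are commuting symmetric operators, any nontrivial spectral projection of some $\pi(r)$ would lie in the commutant and split $H$, so irreducibility forces every $\pi(r)$ to be scalar, $\pi(r)=\phi(r)I$. Then $\phi(rs)=\langle\pi(r)\pi(s)\hat 1,\hat 1\rangle=\phi(r)\phi(s)$, so $\phi$ is a character in $V_R$ with $\phi(M)\ge 0$ and $\phi(p)\le 0$, contradicting (2).

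For the final affine assertion I would argue through the ring of bounded elements $H_M=\{a\in R : l\cdot 1\pm a\in M \text{ for some } l>0\}$, which is a subalgebra of $R$. If $M$ is archimedean, applying the defining condition to $r=\sum_i \bar x_i^2$ gives $g:=l\cdot 1-\sum_i\bar x_i^2\in M$, and $\{\phi\in V_R : \phi(g)\ge 0\}$ is the intersection of the real variety with a Euclidean ball, hence compact. Conversely, suppose some $g\in M$ has $\{\phi : \phi(g)\ge 0\}$ compact. The key point is that then $N\cdot 1-\sum_i\bar x_i^2\in M$ for some $N\in\NN$: from such a relation one gets $N\cdot 1-\bar x_i^2\in M$, and combining with $(\bar x_i\pm\tfrac12)^2\in M$ shows each generator $\bar x_i$ lies in $H_M$, whence $H_M=R$ and $M$ is archimedean. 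Producing that single bounded relation from mere compactness of a sublevel set is the delicate point; I would obtain it by lifting to $\rx$ and invoking Theorem \ref{putinar}, which converts the compactness hypothesis into exactly such a membership certificate.
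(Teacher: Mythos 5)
The first thing to note is that the paper contains no proof of Theorem \ref{jacobi} at all: it is a quoted result, attributed to Jacobi's representation theorem and Schm\"udgen's positivstellensatz via \cite[5.7.2 and 6.1.4]{mm}. So the fair comparison is with those sources and with the paper's own abstract machinery, of which Theorem \ref{jacobi} is the commutative special case. Your proof of the equivalence $(1)\Leftrightarrow(2)$ is correct and is essentially the classical Kadison--Dubois/Jacobi argument: order-unit duality and Krein--Milman reduce the problem to extreme $M$-positive states, and the one substantive lemma is that extreme points of the state set $S$ are characters. This exactly parallels Propositions \ref{propgen} and \ref{propex} of the paper, but your route to the key lemma is genuinely different: the paper proves factorizability of extreme states by the direct convex splitting $\omega=\omega(y)\omega_1+\omega(1-y)\omega_2$, certifying $M$-positivity of $\omega_1,\omega_2$ through Proposition \ref{propgen2}(2)$\Leftrightarrow$(3) and the identity $1-(1-y)^2=\frac12\big(y(2-y)^2+(2-y)y^2\big)$, whereas you run GNS and extract multiplicativity from irreducibility plus the spectral theorem for commuting bounded symmetric operators. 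Both are sound; yours invokes heavier analytic machinery (a real spectral theorem), while the paper's argument is more algebraic and survives verbatim in the noncommutative setting, which is what the paper needs. Two glosses you should fill in: if (1) fails then $M\ne R$ (if $-1\in M$ then $r=\big(\tfrac{r+1}{2}\big)^2+(-1)\big(\tfrac{r-1}{2}\big)^2\in M$ for all $r$), and nonemptiness of $S$ then follows by separating at the interior point $1$; your duality formula presupposes this.

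The affine part, however, has a genuine gap exactly at the point you flag as delicate. Theorem \ref{putinar} cannot be ``invoked'' to produce $N-\sum_i \bar x_i^2\in M$, because its hypothesis \emph{is} the certificate you are trying to manufacture: it requires an element $g'$ of the lifted quadratic module whose nonnegativity set in $\RR^d$ is compact, while you only know compactness of $\{\phi\in V_R \mid \phi(g)\ge 0\}$, i.e.\ of the nonnegativity set \emph{inside the variety}. The naive lift fails: for $R=\RR[x_1,x_2]/(x_2-x_1^2)$ and $g=1-\bar x_2$ the set on the parabola is compact, but $\{x\in\RR^2 \mid 1-x_2\ge 0\}$ is not. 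The statement is nevertheless true, and the standard proof (the one behind the paper's citation) goes through Schm\"udgen's positivstellensatz in W\"ormann's form rather than Putinar's theorem: lift to $S=\{\tilde g,\pm h_1,\ldots,\pm h_k\}$ with $h_1,\ldots,h_k$ generating the defining ideal $I$; since any $p\in\rx$ splits as a difference of two sums of squares, $I\subseteq M_S$, and every product of two or more generators lies in $\pm I$, so the quadratic module $M_S$ coincides with the finitely generated \emph{preordering} $T_S$; its set $K_S=\{x\in Z(I)\mid \tilde g(x)\ge 0\}$ is compact by hypothesis, so W\"ormann's theorem yields $N-\sum_i x_i^2\in T_S=M_S$, whose image in $R$ lands in $M$, and then your $H_M$-argument finishes. (Note this appeal to compactness $\Rightarrow$ archimedean is legitimate only because $M_S$ is a preordering here; for general quadratic modules it is false, by Jacobi--Prestel.) Alternatively your Putinar route can be repaired, but only at the cost of an extra \L{}ojasiewicz-at-infinity argument: one must construct $w\in I$, e.g.\ $w=-\lambda(1+\sum_i x_i^2)^m\sum_j h_j^2$, such that $\{x\in\RR^d \mid \tilde g(x)+w(x)\ge 0\}$ is compact, using a lower bound $\sum_j h_j^2\ge c\,(1+\Vert x\Vert^2)^{-N}$ on the semialgebraic set $\{\tilde g\ge 0,\ \Vert x\Vert\ge R_0\}$. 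Either way this is a substantive missing step, not a citation.
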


We are interested in generalizations of this theory from usual to hermitian operator-valued poly\-nomials, 
i.e. from $\rx$ to $\rx \otimes A_h$ where $A$ is some operator algebra with involution. 
Below, we will survey known generalizations of Theorems \ref{putinar}, \ref{putinar-vasilescu} and \ref{fejer-riesz}
and formulate our main result which is a generalization of Theorem \ref{jacobi}. 
Such results are of interest in control theory.
They fit into the emerging field of noncommutative real algebraic geometry, see \cite{sch2}.

The first result in this direction was the following generalization of Theorem \ref{putinar-vasilescu}
which was proved by Ambrozie and Vasilescu in \cite{av}, see the last part of their Theorem 8.
We say that an element $a$ of a
$C^\ast$-algebra $A$ is \textit{nonnegative} (i.e.  $a \ge 0$) if $a=b^\ast b$ for some $b \in A$
and that it is \textit{strictly positive} (i.e. $a>0$) if $a-\eps \ge 0$ for some real $\eps>0$.

\begin{citethm}
\label{ambrozie-vasilescu}
Let $A$ be a $C^\ast$-algebra and let $p \in \rx \otimes A_h$ and $p_k \in \rx \otimes M_{\nu_k}(\CC)_h$, $k=1,\ldots,m$,
$\nu_k \in \NN$, be homogeneous polynomials of even degree. Assume that
$K_0:=\{t \in S^{d-1} \mid p_1(t) \ge 0,\ldots,p_m(t) \ge 0\}$
is nonempty and $p(t)>0$ for all $t \in K_0$.
Then there are homogeneous polynomials $q_j \in \rx \otimes A$, $q_{jk} \in \rx \otimes M_{\nu_k \times 1}(A)$,
$j \in J$, $J$ finite, 
and an integer $\theta$ such that
$$(x_1^2+\ldots+x_d^2)^\theta p = \sum_{j \in J} \big( q_j^\ast q_j+\sum_{k=1}^m q_{jk}^\ast p_k q_{jk} \big).$$
\end{citethm}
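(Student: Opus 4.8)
The plan is to dehomogenize the problem onto the unit sphere, apply an archimedean positivstellensatz for $\ast$-algebras, and then rehomogenize. Set $u := x_1^2 + \cdots + x_d^2$ and, after adjoining a unit to $A$ if necessary, assume $A$ is unital. Work in the $\ast$-algebra $\mathcal{B} := \rx \otimes A$, where the $x_i$ are central and hermitian and the $A$-factor carries the $C^\ast$-involution, and let $M \subseteq \mathcal{B}_h$ be the operator quadratic module consisting of all finite sums $\sum_{j \in J}\big(q_j^\ast q_j + \sum_{k=1}^m q_{jk}^\ast p_k q_{jk}\big)$ with $q_j \in \mathcal{B}$ and $q_{jk} \in \rx \otimes M_{\nu_k \times 1}(A)$. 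The assertion to be proved is exactly that $u^\theta p \in M$ for some $\theta \in \NN$, so the task is to produce one such representation.

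First I would pass to the sphere. Because $p$ and the $p_k$ are homogeneous of even degree, the hypotheses constrain them only on $K_0 = K_S \cap S^{d-1}$, and the relation $u = 1$ bounds the coordinates. Concretely I replace $\mathcal{B}$ by the quotient $\ast$-algebra $\bar{\mathcal{B}} := \mathcal{B}/(u-1)\mathcal{B}$ and $M$ by its image $\bar{M}$, and claim $\bar{M}$ is archimedean. Since the $\bar{M}$-bounded elements form a $\ast$-subalgebra, it is enough to bound the generators: for each coordinate one has $1 - x_i^2 = \sum_{\ell \neq i} x_\ell^2 \in \bar{M}$, whence $(1 \pm x_i)^2 + (1 - x_i^2) = 2(1 \pm x_i)$ shows $1 \pm x_i \in \bar{M}$; and for each $a \in A_h$ one has $\|a\| \pm a \in \bar{M}$ by $C^\ast$-positivity. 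Thus $\ell \pm b \in \bar{M}$ for some real $\ell > 0$ for every $b \in \bar{\mathcal{B}}_h$.

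Next I would apply the archimedean positivstellensatz for $\ast$-algebras, the noncommutative extension of Theorem \ref{jacobi} on which the paper rests. Applied to the archimedean module $\bar{M}$ and the hermitian element $\bar{p}$, it gives $\bar{p} \in \eps + \bar{M}$ for some real $\eps > 0$ provided $\bar{p}$ is uniformly positive in every $\ast$-representation $\pi$ of $\bar{\mathcal{B}}$ with $\pi(\bar{M}) \subseteq B(H)_+$. In any such $\pi$ the central, mutually commuting operators $\pi(x_i)$ satisfy $\sum_i \pi(x_i)^2 = \id$, so $\pi$ decomposes over points $t \in S^{d-1}$; positivity of $\pi$ on the elements $q_{jk}^\ast p_k q_{jk}$ confines the joint spectrum of the $\pi(x_i)$ to $K_0$, and there the hypothesis yields $p(t) > 0$ in $A$. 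The quantitative point is that $p > 0$ on the compact, nonempty set $K_0$ furnishes a single $\eps > 0$ with $p(t) \ge \eps$ throughout $K_0$; this uniform strictness, read through the $C^\ast$-structure, is what secures membership in $\eps + \bar{M}$ rather than merely in its closure.

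The main obstacle is precisely this abstract step: proving the $\ast$-algebra archimedean positivstellensatz and, inside it, classifying the admissible representations and removing the closure. Its proof should run through the GNS construction for the states of $(\bar{\mathcal{B}}, \bar{M})$ --- archimedeanity makes this state space compact and convex and forces the associated representations to send $\bar{M}$ into the positive operators --- combined with a Hahn--Banach separation to account for the $\eps$-gap. Granting $\bar{p} - \eps \in \bar{M}$, the rest is routine rehomogenization: lifting to $\mathcal{B}$ yields $p - \eps = \sigma + (u-1)h$ with $\sigma \in M$ and $h \in \mathcal{B}_h$; homogenizing this identity degree by degree, multiplying through by a large power of $u$ to clear the factor $u - 1$, and using that $u$ restricts to $1$ on $S^{d-1}$ then deliver homogeneous $q_j, q_{jk}$ with $u^\theta p \in M$, which is the desired conclusion.
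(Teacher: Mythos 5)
Your proposal is correct in outline, but it takes a genuinely different route through the middle of the argument than the paper does. The paper never quotients by the ideal $(u-1)$: instead it encodes the sphere constraint inside a single block-diagonal matrix polynomial $p_0=[1-\Vert x\Vert^2]\oplus[\Vert x\Vert^2-1]\oplus p_1\oplus\cdots\oplus p_m \in M_\nu(\rx)$, lets $M_0$ be the quadratic module generated by $p_0$ in $M_\nu(\rx)$ (archimedean by Lemma \ref{arch}, since it contains $(1-\Vert x\Vert^2)I_\nu$), and then invokes its Theorem \ref{avthm} with $U=F^\nu$ to conclude $p\in M_0'$. Theorem \ref{avthm} in turn rests on Theorem \ref{mainthm}, whose proof is purely state-theoretic: extreme $M$-positive states are factorizable (Proposition \ref{propex}), hence split as $\phi\otimes\sigma$ with $\phi$ multiplicative on $R$, which reduces everything to points of $\RR^d$ with no spectral theory at all. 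You instead apply the abstract archimedean positivstellensatz (Proposition \ref{propgen}, which you correctly identify as the crux and whose GNS-plus-separation proof you sketch accurately) directly to your module in $\rx\otimes A$ modulo $(u-1)$, and verify the representation-theoretic hypothesis by decomposing an $M$-positive representation over the joint spectrum of the commuting operators $\pi(x_i)$. That verification is sound and can be made rigorous: the joint spectral measure $E$ of the $\pi(x_i)$ is supported in $S^{d-1}$ since $\sum_i\pi(x_i)^2=\id$; testing positivity on $q^\ast p_k q$ with $q=g(x)\,\xi\,1_A$ and using Stone--Weierstrass localizes the support inside $K_0$; and a partition-of-the-spectrum estimate (avoid literal direct integrals, which would need separability) gives $\pi(p)\ge\eps$ from the uniform lower bound on $K_0$. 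What each route buys: your argument is closer in spirit to Ambrozie and Vasilescu's original operator-theoretic proof and is self-contained modulo the classical Vidav--Handelman result, while the paper's soft factorizable-states machinery avoids spectral theory entirely and is what lets it generalize the theorem from $C^\ast$-algebras to arbitrary algebraically bounded $\ast$-algebras and from $\rx$ to arbitrary commutative real algebras.

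Two compressed steps deserve attention. First, in the rehomogenization you say you ``multiply through by a large power of $u$ to clear the factor $u-1$''; multiplication by $u^N$ does not annihilate $(u-1)h$ --- it is the substitution $x\mapsto x/\Vert x\Vert$ (which you gesture at with ``$u$ restricts to $1$'') that kills that term, after which multiplying by $\Vert x\Vert^{2\theta}$ clears denominators. Second, this substitution produces terms of the form $u_j(x)+\Vert x\Vert v_j(x)$ with odd powers of $\Vert x\Vert$, which are not polynomials in $x$; the paper removes them by the parity trick of replacing $\Vert x\Vert$ with $-\Vert x\Vert$ and averaging the two identities, which turns each square into $u_j^\ast u_j+\sum_i(x_iv_j)^\ast(x_iv_j)$ and simultaneously yields the homogeneity of the $q_j,q_{jk}$ (each surviving term is homogeneous of the correct degree). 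Your ``homogenizing degree by degree'' can be made to do the same work, but as written it elides exactly this point.
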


Our interest in this subject stems from the following generalization of Theorem \ref{putinar} 
which is a reformulation of a result of Scherer and Hol. See \cite[Corollary 1]{hs} for the original result
and \cite[Theorem 13]{ks} for the reformulation and extension to infinite $S$.

\begin{citethm}
\label{klep-schweighofer}
For a finite subset $S=\{p_1,\ldots,p_m\}$ of $M_\nu(\rx)_h$, $\nu \in \NN$, write
$K_S:=\{t \in \RR^d \mid p_1(t)\ge 0,\ldots,p_m(t)\ge 0\}$ and
$M_S:=\{\sum_{j \in J} \big( q_j^\ast q_j+\sum_{k=1}^m q_{jk}^\ast p_k q_{jk} \big) \mid
q_j, q_{jk} \in M_\nu(\rx), j \in J, J \mbox{ finite}\}$. If there is $g \in M_S \cap \rx$
such that the set $\{x \in \RR^d \mid g(x)\ge 0\}$ is compact
(i.e. the quadratic module $M_S \cap \rx$ in $\rx$ is archimedean)
then for every $p \in M_\nu(\rx)_h$ such that $p(t)>0$ on $K_S$ we have that $p \in M_S$.
\end{citethm}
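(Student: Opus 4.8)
The plan is to deduce Theorem \ref{klep-schweighofer} from the abstract archimedean positivstellensatz of Theorem \ref{jacobi} applied not to $\rx$ itself but to an enlarged \emph{commutative} algebra into which the matrix data can be encoded. The obstacle is that $M_\nu(\rx)_h$ is not a commutative algebra and $M_S$ is not a quadratic module in the sense of Theorem \ref{jacobi}; the polynomials $p_k$ are matrix-valued, the ``squares'' $q_j^\ast q_j$ are matrix expressions, and positivity of $p(t)$ on $K_S$ means positive-definiteness of a matrix at each point, not positivity of a scalar function. So the first order of business is a translation step that converts matrix positivity into scalar positivity on a larger space.

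First I would introduce auxiliary variables. Adjoin a vector of indeterminates $y=(y_1,\ldots,y_\nu)$ and pass to the commutative algebra $R:=\rx[y_1,\ldots,y_\nu]$, or better to a suitable quotient or localization that carries the sphere condition $\sum_i y_i^2 = 1$ so as to keep compactness in the fibre direction. To a hermitian matrix polynomial $q \in M_\nu(\rx)_h$ I associate the scalar polynomial $\hat q(x,y):= y^{\ast} q(x) y = \sum_{i,j} y_i q_{ij}(x) y_j \in R$. Under this map a matrix sum of squares $q_j^\ast q_j$ goes to $(q_j y)^\ast(q_j y)$, an honest sum of squares in $R$, and $q_{jk}^\ast p_k q_{jk}$ goes to $(q_{jk}y)^\ast p_k (q_{jk} y)$; thus the image of $M_S$ lands inside the scalar quadratic module $N$ in $R$ generated by the entries $\hat p_k$ together with the sphere relation. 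The point of the sphere constraint $\sum y_i^2=1$ is that a hermitian matrix $p(t)$ is positive definite iff $y^\ast p(t) y>0$ for all unit vectors $y$, so the hypothesis $p(t)>0$ on $K_S$ becomes exactly the scalar positivity of $\hat p$ on the semialgebraic set cut out in the $(x,y)$-space.

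The heart of the argument is then to check the two standing hypotheses of Theorem \ref{jacobi} for $(R,N)$ and to read the conclusion back. For archimedeanness: the given $g \in M_S \cap \rx$ makes $\rx$-part bounded, and the adjoined sphere relation bounds the $y$-variables, so $N$ (after incorporating both) is archimedean — concretely, one exhibits $l - \hat r \in N$ for each generator, reducing to the scalar generators $x_i$ and $y_i$, where the bound on the $x_i$ comes from the archimedean hypothesis on $M_S\cap\rx$ and the bound on the $y_i$ from $\sum y_i^2=1$. Applying Theorem \ref{jacobi} to the strictly positive $\hat p$ yields $\hat p = \eps + \sigma$ with $\sigma \in N$, hence $\hat p \in N$. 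The final and most delicate step is the descent back to the matrix algebra: one must argue that a representation of the \emph{specific} biquadratic form $\hat p(x,y)=y^\ast p(x) y$ inside the $y$-graded module $N$ can be chosen to respect the bidegree $(\,\cdot\,,2)$ in $y$, so that collecting the coefficients of the monomials $y_i y_j$ recovers an identity of $\nu\times\nu$ matrices of the form $p = \sum_j(q_j^\ast q_j + \sum_k q_{jk}^\ast p_k q_{jk})$, i.e.\ membership $p \in M_S$. This homogeneity-in-$y$ bookkeeping is where the real work lies, since a priori the representation produced by Theorem \ref{jacobi} mixes $y$-degrees; I expect to handle it by a projection/averaging onto the isotypic piece of degree two in $y$, using that $\hat p$ and all generators $\hat p_k$ are themselves homogeneous of degree two in $y$ and that the sphere relation can be used to homogenize, which is precisely the anticipated main obstacle.
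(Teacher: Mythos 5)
Your reduction direction is fine: passing to $R=\RR[x,y]/(1-\Vert y\Vert^2)$, mapping $q\mapsto y^\ast q(x)y$, checking archimedeanness of the scalar quadratic module $N$ (the $x$-bound from $g\in M_S\cap\rx$, the $y$-bound from the sphere), and applying Theorem~\ref{jacobi} to get $\hat p:=y^\ast p(x)y\in\eps+N$ are all correct. The genuine gap is the descent step, and the fix you anticipate --- ``projection/averaging onto the isotypic piece of degree two in $y$'' --- does not work. Write the certificate as $\hat p=\sigma_0+\sum_k\sigma_k\,\hat p_k+\lambda(1-\Vert y\Vert^2)$ with $\sigma_0,\sigma_k\in\sos{\RR[x,y]}$ and $\lambda$ arbitrary. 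Extracting the $y$-degree-$2$ component fails for three independent reasons. First, the degree-$2$ part of a square is $[h^2]_2=h_1^2+2h_0h_2$, and the cross term $2h_0h_2$ is not a sum of squares; averaging over $y\mapsto-y$ only kills odd components and leaves exactly this term, while averaging over all of $O(\nu)$ is unavailable because $y^\ast p_k y$ is not $O(\nu)$-invariant (the module $N$ is not preserved). Second, the ideal multiplier contributes $[\lambda]_2-[\lambda]_0\Vert y\Vert^2$ in degree $2$, an essentially arbitrary quadratic form in $y$ that cannot be discarded. Third, even without the ideal, the identity only holds after summing all $y$-degrees, and cancellation is permitted \emph{across} terms: e.g.\ the top-degree component gives $\sum_j h_{j,m}^2+\sum_k[\sigma_k]_{2m-2}\,y^\ast p_ky=0$, which forces nothing since $p_k$ is not positive semidefinite on all of $\RR^d$, so you cannot conclude that the high-degree pieces vanish. (Your optimism is probably fed by the one case where extraction is classical: if $\hat p$ is a pure sum of squares, homogeneous of degree $2$ in $y$ with no constraints and no ideal, then a top-degree argument forces all $h_j$ to be linear in $y$ and one recovers a matrix sum of squares. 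That argument collapses as soon as the $\sigma_k\hat p_k$ terms or the sphere relation enter.) The natural repair --- homogenize in $y$ using $\Vert y\Vert^2\equiv 1$ --- only yields a certificate for $\Vert y\Vert^{2N}\hat p$, and removing this Reznick-type denominator in the weighted, $x$-dependent setting is as hard as the theorem you are trying to prove; no elementary projection accomplishes it.

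This is precisely why the paper does not scalarize. Its proof of Theorem~\ref{klep-schweighofer} runs through the noncommutative side: $M_S$ is a quadratic module in the $\ast$-algebra $\rx\otimes M_\nu(\RR)$, Lemma~\ref{arch} upgrades archimedeanness of $M_S\cap\rx$ to archimedeanness of $M_S$, and Theorem~\ref{mainthm} (proved via Proposition~\ref{propex}: extreme $M$-positive states are factorizable, hence of product form $\phi\otimes\sigma$ with $\phi$ multiplicative) reduces $p\in\eps+M_S$ to positivity of the images $(\phi\otimes\id)(p)$ modulo the image modules $(\phi\otimes\id)(M_S)$; the degenerate states $\phi\notin K_S$ are handled by the dichotomy that a quadratic module in $M_\nu(\RR)$ not contained in the positive semidefinite cone is all of $M_\nu(\RR)_h$ (cf.\ Lemma~\ref{compact} and Theorem~\ref{ksthm}). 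In effect the paper replaces your degree-bookkeeping in $y$ by a Krein--Milman/separation argument in the dual, which is where the matrix structure is actually absorbed. If you want to persist with your route, you would need a nontrivial extraction lemma (membership of $y^\ast py$ in $N$ implies $p\in M_S$) as an independent result; as stated, your final step asserts it without a proof, and the theorem's full content is hiding there.
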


Finally, we mention an interesting generalization of Theorem \ref{fejer-riesz} which was 
proved by Dritschel and Rovnyak in \cite[Theorem 5.1]{dr}.

\begin{citethm}
\label{dritschel-rovnyak}
Let $A$ be the $\ast$-algebra of all bounded operators on a Hilbert space. If an element
$$p \in \RR[\cos \phi_1, \sin \phi_1,\ldots,\cos \phi_n,\sin \phi_n] \otimes A_h$$
is strictly positive for every $\phi_1,\ldots,\phi_n$ then $p=\sum_{j \in J} q_j^\ast q_j$ 
for some finite $J$ and $q_j \in \RR[\cos \phi_1, \sin \phi_1,\ldots,\cos \phi_n,\sin \phi_n] \otimes A$.
\end{citethm}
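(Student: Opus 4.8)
The plan is to realize Theorem \ref{dritschel-rovnyak} as one instance of an abstract archimedean positivstellensatz for $\ast$-algebras — the noncommutative analogue of Theorem \ref{jacobi} — applied to the $\ast$-algebra $\mathcal{B} := T \otimes A$, where $T := \RR[\cos\phi_1,\sin\phi_1,\ldots,\cos\phi_n,\sin\phi_n]$ carries the trivial involution and $A$ the operator adjoint, together with the quadratic module $M := \sos{\mathcal{B}}$ of sums of hermitian squares. Note first that $\eps\cdot 1 = (\sqrt{\eps})^\ast(\sqrt{\eps}) \in M$ and $M+M\subseteq M$, so $\eps + M \subseteq M$; hence it suffices to produce some $\eps>0$ with $p \in \eps + M$, which is exactly the output of such a positivstellensatz (and $M$, being finite sums of squares by definition, then yields the desired finite representation $p=\sum_{j\in J}q_j^\ast q_j$).

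The first genuine task is to verify that $M$ is archimedean, i.e.\ that for every hermitian $b\in\mathcal{B}$ there is a real $N$ with $N-b\in M$. The trigonometric generators are controlled by Theorem \ref{fejer-riesz}: for $t=t^\ast\in T$ the element $N-t^2$ is strictly positive on the torus once $N$ is large, hence a sum of squares in $T$, so $N-t^2\in\sos{T}$ and thus $N-(t\otimes1)^2\in M$. The operator generators are controlled by the $C^\ast$-identity: for $a\in A$ one has $\|a\|^2-a^\ast a\ge0$ in $A$, hence $\|a\|^2-a^\ast a\in\sos{A}$ and so $\|a\|^2-(1\otimes a)^\ast(1\otimes a)\in M$. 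The structural point that makes this propagate is that $T\otimes1$ is \emph{central} in $\mathcal{B}$: writing an arbitrary element as a finite sum $b=\sum_k t_k\otimes a_k$, one bounds each summand by combining $N_a(N_z-z^2)\in M$ with $z^2(N_a-a^\ast a)=z(N_a-a^\ast a)z\in\sos{\mathcal{B}}$ (centrality and self-adjointness of $z=t_k\otimes1$, plus closure of $\sos{\mathcal{B}}$ under $c\mapsto c^\ast(\cdot)c$), and one bounds the sum using that $2b_1^\ast b_1+2b_2^\ast b_2-(b_1+b_2)^\ast(b_1+b_2)=(b_1-b_2)^\ast(b_1-b_2)\in\sos{\mathcal{B}}$. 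For hermitian $b$ a bound on $b^2$ upgrades to a bound on $b$ via the identity $1-b=\frac{1}{2}\big((1-b)^2+(1-b^2)\big)$ after rescaling, so $M$ is archimedean.

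With archimedeanity established, the abstract positivstellensatz asserts that $p\in\eps+M$ for some $\eps>0$ precisely when $f(p)>0$ for every state $f$ of $\mathcal{B}$ with $f(M)\ge0$, and the weak-$\ast$ compactness of this family of states (a consequence of archimedeanity) makes the inequality automatically uniform. So it remains to prove $f(p)>0$ for all such $f$. Passing to the GNS representation $\pi_f$ — which is by \emph{bounded} operators since $M$ is archimedean — I would use that the commuting self-adjoints $\pi_f(\cos\phi_i\otimes1)$ and $\pi_f(\sin\phi_i\otimes1)$ satisfy $\cos^2+\sin^2=1$, so their joint spectral measure $E$ is supported on the torus; since $1\otimes A$ commutes with $T\otimes1$, the operators $\pi_f(1\otimes a)$ commute with $E$, and disintegration over the torus gives $\pi_f(p)=\int^{\oplus}\rho_\phi\big(p(\phi)\big)\,dE(\phi)$, where each $\rho_\phi$ is a $\ast$-representation of $A$ and $p(\phi)\in A$ is the evaluation of $p$ at the torus point $\phi$. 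By hypothesis each $p(\phi)$ is strictly positive in $A$; norm-continuity of $\phi\mapsto p(\phi)$ and compactness of the torus make this uniform, $p(\phi)-\eps=b_\phi^\ast b_\phi$ for a single $\eps>0$. As $C^\ast$-positivity is preserved by every $\ast$-representation, $\rho_\phi(p(\phi))\ge\eps$ for all $\phi$, whence $\pi_f(p)\ge\eps$ and $f(p)=\langle\pi_f(p)\xi_f,\xi_f\rangle\ge\eps$. This holds uniformly over all admissible $f$, so $p\in\eps+M\subseteq\sos{\mathcal{B}}$.

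I expect this last step to be the main obstacle. Although $A$ admits many pathological $\ast$-representations, I must transfer pointwise positivity on the torus to positivity under every $M$-state, and the argument hinges on two facts that need care: that the trigonometric variables pin the joint spectrum to the compact torus, so that $\pi_f$ genuinely disintegrates over torus points, and that strict positivity of $p(\phi)$ is an intrinsic $C^\ast$-property surviving each $\rho_\phi$. The delicate bookkeeping is the direct-integral reduction of $\pi_f(p)$ to fibrewise evaluations (measurability of $\phi\mapsto\rho_\phi(p(\phi))$ and legitimacy of the integral estimate). I also expect the restriction to $A=B(H)$ to be inessential: since the proof uses only the $C^\ast$-structure of $A$, the same scheme should work verbatim for an arbitrary $C^\ast$-algebra.
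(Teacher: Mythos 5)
Your proposal is correct in outline but follows a genuinely different route from the paper, and its one delicate step needs repair. The paper never touches spectral theory: its mechanism is to strengthen the abstract archimedean positivstellensatz (Proposition \ref{propgen}) so that positivity need only be tested on \emph{factorizable} $M$-positive states, which rests on Proposition \ref{propex} (extreme $M$-positive states are factorizable --- an algebraic Cauchy--Schwarz argument extending Takesaki's lemma) together with Krein--Milman and compactness of $M^\vee$. A factorizable state on $R \otimes A$ automatically splits as $\phi \otimes \sigma$ with $\phi$ multiplicative on $R$ and $\sigma$ a state on $A$; this is exactly Theorem \ref{mainthm}. Archimedeanity is supplied by Lemma \ref{arch} --- your archimedeanity paragraph is essentially the proof of that lemma and is fine --- and Theorem \ref{dritschel-rovnyak} then falls out of Theorem \ref{sh} (the case $U=F$ of Theorem \ref{avthm}) with $M=\sos{R}$, so that $M'=\sos{R\otimes A}$, the multiplicative states being the torus evaluations. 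You instead take the plain equivalence (2)$\Leftrightarrow$(4) of Proposition \ref{propgen} and analyze an \emph{arbitrary} $M$-positive state through its GNS representation by disintegrating over the central trigonometric variables. What the paper's route buys is complete independence from $C^\ast$-machinery: it works verbatim for any algebraically bounded $\ast$-algebra $A$ and for quadratic modules well beyond sums of squares. What your route buys is directness in the concrete $B(H)$ case, at the price of the analytic apparatus --- and that apparatus is where the gap sits.

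Concretely: the direct-integral formula $\pi_f(p)=\int^{\oplus}\rho_\phi(p(\phi))\,dE(\phi)$ is not available in the stated generality, because the theorem allows a nonseparable $H$, and then the GNS space $H_f$ of a state on $T\otimes B(H)$ need not be separable; measurable-field and disintegration theory require separability, and producing the fibre representations $\rho_\phi$ of all of $A$ with the needed measurability is precisely the bookkeeping you flag as the obstacle. You can bypass it entirely. Let $D$ be the $C^\ast$-algebra generated by $\pi_f(T\otimes B(H))$ in $B(H_f)$; the image of $T\otimes 1$ generates a \emph{central} commutative $C^\ast$-subalgebra whose spectrum lies in the torus, since the images of $\cos\phi_i\otimes 1$ and $\sin\phi_i\otimes 1$ are commuting self-adjoint contractions satisfying $c_i^2+s_i^2=1$. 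To show $\pi_f(p)\ge\eps$ it suffices to verify this in every irreducible representation $\rho$ of $D$; there, by Schur's lemma, the central trig operators become scalars $(\cos\theta_i,\sin\theta_i)$ at a torus point $\theta$, so $\rho(\pi_f(p))=\tilde\rho(p(\theta))$, where $\tilde\rho(a):=\rho(\pi_f(1\otimes a))$ is a unital $\ast$-representation of $B(H)$. Since $p(\theta)-\eps$ is a square in $B(H)$, with $\eps$ uniform over the torus by norm-continuity and compactness exactly as you argue, we get $\rho(\pi_f(p))\ge\eps$, hence $\pi_f(p)\ge\eps$ and $f(p)\ge\eps$. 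With this substitution (or with a partition-of-unity estimate via continuous functional calculus on the central operators) your proof goes through with no separability or measurability hypotheses; your closing expectation is also correct, and the paper in fact goes further than arbitrary $C^\ast$-algebras, to algebraically bounded $\ast$-algebras, where a spectral argument like yours would first have to pass to the $C^\ast$-completion for the natural seminorm.
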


The aim of this paper is to prove the following very general operator-theoretic positivstellensatz
and show that it implies generalizations of Theorems \ref{ambrozie-vasilescu},
\ref{klep-schweighofer} and \ref{dritschel-rovnyak}.
(They will be extended from finite to arbitrary $S$, from $C^\ast$-algebras to
algebraically bounded $\ast$-algebras $A$ and from (trigonometric) polynomials to
 affine commutative real algebras. Theorem \ref{klep-schweighofer} will also be extended
from matrices to more general operators.)

\begin{citethm}
\label{newthm}
Let $R$ be a commutative real algebra,
$A$ a real or complex $\ast$-algebra  and $M$ a
quadratic module (cf. section \ref{facsec}) in $R \otimes A$.
If $M$ is archimedean then for every $p \in R \otimes A_h$ the following are equivalent:
\begin{enumerate}
\item $p \in \eps +M$ for some real $\eps>0$.
\item For every multiplicative state $\phi$ on $R$, there exists real $\eps_\phi>0$
such that $(\phi \otimes \id_A)(p) \in \eps_\phi+(\phi \otimes \id_A)(M)$.
\end{enumerate}
If $A$ is algebraically bounded (cf. section \ref{avsec}) and the quadratic module
$M \cap R$ in $R$ is archimedean (cf. Theorem \ref{jacobi}) then $M$ is archimedean.
\end{citethm}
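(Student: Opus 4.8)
The plan is to treat the equivalence (1)$\Leftrightarrow$(2) as the core and then deduce the archimedean criterion separately. The implication (1)$\Rightarrow$(2) is immediate: applying the homomorphism $\phi\otimes\id_A$ to an identity $p=\eps+m$ with $m\in M$ gives $(\phi\otimes\id_A)(p)=\eps+(\phi\otimes\id_A)(m)$ with $(\phi\otimes\id_A)(m)\in(\phi\otimes\id_A)(M)$, so $\eps_\phi=\eps$ works. For the converse I would first reduce to a statement about states. The tool is the abstract archimedean positivstellensatz for $\ast$-algebras: if $M$ is an archimedean quadratic module in a unital $\ast$-algebra $B$, then for $p=p^\ast$ one has $p\in\eps+M$ for some $\eps>0$ exactly when $L(p)>0$ for every state $L$ on $(B,M)$, i.e.\ every Hermitian functional with $L(1)=1$ and $L(M)\subseteq[0,\infty)$. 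The reverse direction here is Hahn--Banach separation in $B_h$: archimedeanity makes the state space weak-$\ast$ compact, so $L\mapsto L(p)$ attains a positive minimum, and separating $p-\delta$ from $M$ would produce a contradicting state. Applying this to $B=R\otimes A$ reduces (2)$\Rightarrow$(1) to proving $L(p)>0$ for every state $L$ of $(R\otimes A,M)$.

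Since $L\mapsto L(p)$ is continuous and affine on the compact convex state space, its minimum is attained at an extreme point, so it suffices to treat extreme states $L$. The decisive point is that $R\otimes1$ is central in $R\otimes A$: in the GNS representation $\pi_L$ the operators $\pi_L(r)$, $r\in R$, are self-adjoint, bounded (from $l\pm r\in M$ one gets $\|\pi_L(r)\|\le l$), and central, hence lie in the commutant of $\pi_L(R\otimes A)$. If some $\pi_L(r)$ were not scalar, a nontrivial spectral projection of it would again be central and would split $L$ into a proper convex combination of two states of $(R\otimes A,M)$ — here one checks that such a projection preserves $M$-positivity — contradicting extremality. Thus $\pi_L(r)=\phi(r)\,\id$ for a multiplicative state (character) $\phi\colon R\to\RR$, and $\phi$ is automatically nonnegative on $M\cap R$ since $\phi(r)=L(r)\ge0$ there. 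Setting $\Lambda(a):=L(1\otimes a)$, one checks $L(r\otimes a)=\phi(r)\Lambda(a)$, so that $L(p)=\Lambda\big((\phi\otimes\id_A)(p)\big)$ and $L(m)=\Lambda\big((\phi\otimes\id_A)(m)\big)$ for $m\in M$; in particular $\Lambda$ is a state of $A$ nonnegative on $(\phi\otimes\id_A)(M)$. Hypothesis (2) gives $(\phi\otimes\id_A)(p)\in\eps_\phi+(\phi\otimes\id_A)(M)$, whence $L(p)=\Lambda\big((\phi\otimes\id_A)(p)\big)\ge\eps_\phi>0$. This yields $L(p)>0$ for all extreme, hence all, states, and the abstract positivstellensatz delivers $p\in\eps+M$.

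For the final assertion I would show that the set $B_b:=\{u\in R\otimes A\mid \lambda-u^\ast u\in M\text{ for some }\lambda>0\}$ of $M$-bounded elements is a $\ast$-subalgebra, and that $B_b=R\otimes A$ forces $M$ to be archimedean: for Hermitian $u$ with $l^2-u^2\in M$ the identity $2l(l\mp u)=(l^2-u^2)+(l\mp u)^2$ gives $l\pm u\in M$, and general elements follow by splitting into Hermitian and skew parts. Now, since $1\in M$ and $M$ is a quadratic module, $(1\otimes b)^\ast 1(1\otimes b)=1\otimes b^\ast b\in M$ for all $b\in A$, so $1\otimes\sos{A}\subseteq M$; as $A$ is algebraically bounded, every $a\in A$ satisfies $\lambda-a^\ast a\in\sos{A}$ for some $\lambda>0$, whence $1\otimes a\in B_b$. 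Because $M\cap R$ is archimedean in $R$, for each $r\in R$ there is $l>0$ with $l\pm r^2\in M\cap R$, so $l-(r\otimes1)^\ast(r\otimes1)\in M$ and $r\otimes1\in B_b$. Since $R\otimes A$ is generated as an algebra by $R\otimes1$ and $1\otimes A$ and $B_b$ is a subalgebra, $B_b=R\otimes A$, and $M$ is archimedean.

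The main obstacle is the technical backbone rather than any isolated computation. In the equivalence it is the abstract $\ast$-algebraic positivstellensatz together with the claim that an extreme state acts by scalars on the central subalgebra $R\otimes1$; this demands the GNS construction, boundedness of $\pi_L(R)$ from archimedeanity, the spectral theorem (in the real case via complexification), and the verification that the splitting functionals remain $M$-positive states. In the archimedean criterion the effort is concentrated in the lemma that the $M$-bounded elements form a $\ast$-subalgebra, which rests on submultiplicativity of the $C^\ast$-seminorm $u\mapsto\inf\{\lambda\ge0\mid\lambda^2-u^\ast u\in M\}$. Both facts are standard in the theory of archimedean quadratic modules but must be set up with care for general real or complex $\ast$-algebras.
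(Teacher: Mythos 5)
Your proof is correct, and its overall architecture matches the paper's: reduce (2)$\Rightarrow$(1) via the abstract archimedean positivstellensatz (Proposition \ref{propgen}) to showing $L(p)>0$ for all $M$-positive states, pass to extreme states by compactness and Krein--Milman, show an extreme state factors as $\phi\otimes\Lambda$ with $\phi$ multiplicative on $R$, and then invoke hypothesis (2). The genuine difference is in the key lemma that extreme $M$-positive states are multiplicative on the central subalgebra $R\otimes 1$. The paper (Proposition \ref{propex}, extending Takesaki's Lemma IV.4.11) argues at the level of the functional itself: it reduces to central $y$ with $y,1-y\in M$, handles the degenerate cases $\omega(y)\in\{0,1\}$ by Cauchy--Schwarz, and otherwise splits $\omega=\omega(y)\omega_1+\omega(1-y)\omega_2$ where $\omega_1(x)=\omega(xy)/\omega(y)$; the only representation-theoretic input is the equivalence (2)$\Leftrightarrow$(3) of Proposition \ref{propgen2}, used to see that $xy+\eps\in M$ when $x\in M$ and $y$ is central with $y,1-y\in M$. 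You instead split the state with a spectral projection $E$ of $\pi_L(r)$ in the commutant of the GNS representation. This buys familiarity (it is the standard ``factor state'' argument from operator algebras) at the cost of heavier machinery: the spectral theorem, complexification in the real case, and one step you gloss over --- extremality alone is not contradicted by a decomposition $L=tL_1+(1-t)L_2$; you must show $L_1=L$ forces $E$ trivial, which follows by cyclicity of the GNS vector $\xi$ (from $\langle\pi_L(b)\xi,E\xi-\|E\xi\|^2\xi\rangle=0$ for all $b$ one gets $E\xi=t\xi$, whence $t\in\{0,1\}$). The paper's route avoids all of this and also delivers the closedness of the set of factorizable states, which it needs for the implication (6)$\Rightarrow$(5) in Proposition \ref{propgen}; your Bauer-type minimum argument at extreme points is an adequate substitute for that step.

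For the final claim, your argument is in fact more complete than the paper's, which disposes of it with the one-line hint preceding Lemma \ref{arch} (bounded elements are closed under addition and products of commuting elements, phrased for the hermitian bound $l\pm b\in M$). Your route through the $\ast$-subalgebra $B_b=\{u \mid \lambda-u^\ast u\in M\}$, the inclusions $1\otimes A\subseteq B_b$ (from algebraic boundedness and $1\otimes\sos{A}\subseteq M$) and $R\otimes 1\subseteq B_b$ (from archimedeanity of $M\cap R$), and the identity $2l(l\mp u)=(l^2-u^2)+(l\mp u)^2$ is the standard and correct way to fill in that lemma, and it sidesteps the commutativity caveat in the paper's formulation since closure of $B_b$ under products holds for arbitrary quadratic modules.
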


One of the main differences between the operator case and the scalar case is that in the operator case an element of $A_h$
that is not $\le 0$ is not necessarily $>0$. We would like to give an algebraic characterization of operator-valued polynomials 
that are not $\le 0$ in every point from a given set. Every theorem of this
type is called a \textit{nichtnegativsemidefinitheitsstellensatz}. We will prove variants of Theorems
\ref{klep-schweighofer} and \ref{dritschel-rovnyak} that fit into this context.

Finally, we use our results and the main theorem 
from \cite{jo} to get a generalization of the existence result for operator-valued moment problems from \cite{av}
to algebraically bounded $\ast$-algebras.

\section{Factorizable states}
\label{facsec}

Associative unital algebras with involution will be called $\ast$-algebras for short.
Let $B$ be a $\ast$-algebra over $F \in \{\RR,\CC\}$ where $F$ always comes with complex conjugation as involution. 
Write $Z(B)$ for the center of $B$ 
and write $B_h=\{b \in B \mid b^\ast=b\}$ for its set of hermitian elements. Note that the set 
$B_h$ is a real vector space; we assume that it is equipped with the \textit{finest locally convex 
topology}, i.e. the coarsest topology such that every convex absorbing set in $B_h$ is a neighbourhood of zero.

Clearly, every linear functional on $B_h$ is continuous with respect to the finest locally convex topology. 
In other words, the algebraic and the topological dual of $B_h$ are the same; we will write $(B_h)'$ for both. 
We assume that $(B_h)'$ is equipped with the weak*-topology, i.e. topology of pointwise convergence.
We say that $\omega \in (B_h)'$ is \textit{factorizable} if $\omega(xy)=\omega(x)\omega(y)$ for every $x \in B_h$ 
and $y \in Z(B)_h$. Clearly, the set of all factorizable linear functionals on $B_h$ is closed in the weak*-topology.

We say that a subset $M$ of $B_h$ is a \textit{quadratic module} if $1 \in M$, 
$M+M \subseteq M$ and $b^\ast M b \subseteq M$ for every $b \in B$. The smallest quadratic module in $B$ is the set $\sos{B}$
which consists of all finite sums of elements $b^\ast b$ with $b \in B$. The largest quadratic module in $B$ is the set $B_h$.
A quadratic module $M$ in $B$ is \textit{proper} if $M \ne B_h$ (or equivalently, if $-1 \not\in M$.)
Proper quadratic modules in $B$ exist iff $-1 \not\in \sos{B}$.
We say that an element $b \in B_h$ is \textit{bounded} w.r.t. a quadratic module $M$ if 
there exists a number $l \in \NN$ such that $l \pm b \in M$. A quadratic module $M$ is \textit{archimedean}
if every element $b \in B_h$ is bounded w.r.t. $M$
(or equivalently, if $1$ is an interior point of $M$.)

For every subset $M$ of $B_h$ write $M^\vee$ for the set of all $f \in (B_h)'$ which satisfy $f(1)=1$ and $f(M) \ge 0$.
The set of all extreme points of $M^\vee$ will be denoted by $\extr M^\vee$. Elements of $M^\vee$ will be called 
$M$-\textit{positive states} and elements of $\extr M^\vee$ \textit{extreme} $M$-positive states.
A $\sos{B}$-positive state will simply be called a \textit{state}.
Suppose now that $M$ is an archimedean quadratic module. 
Applying the Banach-Alaoglu Theorem to $V=(M-1) \cap (1-M)$ which is a neighbourhood of zero, we see that $M^\vee$ is compact.
The Krein-Milman theorem then implies, that $M^\vee$ is equal to the closure of the convex hull of the set $\extr M^\vee$.
We will show later (see Corollary \ref{corgen})
that $M^\vee$ is non-empty iff $M$ is proper.

Recall that a (bounded) \textit{$\ast$-representation} of $B$ is a homomorphism of unital $\ast$-algebras from $B$ to
the algebra of all bounded operators on some Hilbert space $H_\pi$. We say that a $\ast$-representation $\pi$ of $B$ 
is $M$-\textit{positive} for a given subset $M$ of $B_h$ if $\pi(m)$ is positive semidefinite for every $m \in M$.
For every such $\pi$ and every $v \in H_\pi$ of norm $1$, $\omega_{\pi,v}(x):=\langle \pi(x) v,v \rangle$ belongs to $M^\vee$.
Conversely, if $M$ is an archimedean quadratic module, then every $\omega \in M^\vee$ is of this form by the GNS construction.

The equivalence of (1)-(4) in the following result is sometimes referred to as archimedean positivstellensatz for $\ast$-algebras.
It originates from the Vidav-Handelmann theory, cf. \cite[Section 1]{han} and \cite{vidav}.
Our aim is to add assertions (5) and (6) to this equivalence.

\begin{prop}
\label{propgen}
For every archimedean quadratic module $M$ in $B$ and every element $b \in B_h$
the following are equivalent:
\begin{enumerate}
\item $b \in M^\circ$ (the interior w.r.t. the finest locally convex topology),
\item $b \in \eps +M$ for some real $\eps>0$,
\item $\pi(b)$ is strictly positive definite for every $M$-positive $\ast$-representation $\pi$ of $B$,
\item $f(b) >0$ for every $f \in M^\vee$,
\item $f(b) >0$ for every $f \in \overline{\extr M^\vee}$,
\item $f(b) >0$ for every factorizable $f \in M^\vee$.
\end{enumerate}
\end{prop}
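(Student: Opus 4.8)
The plan is to handle the classical core $(1)\Leftrightarrow(2)\Leftrightarrow(3)\Leftrightarrow(4)$ first and then graft on $(5)$ and $(6)$, which are the genuinely new assertions. For the core I would run the usual cycle. The equivalence $(1)\Leftrightarrow(2)$ is a soft fact about the finest locally convex topology: since $M$ is a convex cone and $M$ is archimedean, the set $V=(M-1)\cap(1-M)$ is convex and absorbing, hence a neighbourhood of $0$; writing $b=\eps+m$ one checks $b+\eps V\subseteq M$, and conversely if $b\in M^\circ$ then $b-\eps\in M$ for small $\eps>0$. For $(2)\Rightarrow(3)$, if $b-\eps\in M$ then $\pi(b)-\eps=\pi(b-\eps)\ge 0$ for every $M$-positive $\pi$, so $\pi(b)$ is strictly positive definite; and $(3)\Rightarrow(4)$ follows from the GNS construction quoted above, since every $f\in M^\vee$ has the form $f=\langle\pi(\cdot)v,v\rangle$. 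The one substantial classical step is $(4)\Rightarrow(2)$: if $(2)$ fails then $b\notin M^\circ$, and a Hahn--Banach separation of $b$ from the (nonempty, by archimedeanity) interior of the convex cone $M$ produces a nonzero functional $f$ with $f(M)\ge 0$ and $f(b)\le 0$; archimedeanity forces $f(1)>0$, so $f/f(1)\in M^\vee$ witnesses the failure of $(4)$.

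For $(5)$ the implication $(4)\Rightarrow(5)$ is immediate because $\overline{\extr M^\vee}\subseteq M^\vee$, the latter being weak*-closed. For $(5)\Rightarrow(4)$ I would use compactness: $M^\vee$ is weak*-compact and convex and the evaluation $f\mapsto f(b)$ is affine and weak*-continuous, so by Bauer's minimum principle (equivalently, the minimizing set is a nonempty compact face whose extreme points are extreme in $M^\vee$) it attains its minimum over $M^\vee$ at some $f_0\in\extr M^\vee\subseteq\overline{\extr M^\vee}$. Then $(5)$ gives $f_0(b)>0$, whence $f(b)\ge f_0(b)>0$ for all $f\in M^\vee$, i.e. $(4)$.

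The heart of the argument is the passage involving $(6)$. Since $(4)\Rightarrow(6)$ is trivial, everything reduces to the key lemma that \emph{every extreme point of $M^\vee$ is factorizable}. Granting this, $\extr M^\vee$ lies in the weak*-closed set of factorizable elements of $M^\vee$, hence so does $\overline{\extr M^\vee}$, and therefore $(6)$ implies $(5)$, closing the circle $(4)\Rightarrow(6)\Rightarrow(5)\Rightarrow(4)$. To prove the lemma, fix $f\in\extr M^\vee$ and $y\in Z(B)_h$; by archimedeanity the affine change $y\mapsto(y+N)/(2N)$ reduces us to $y\in M$ and $1-y\in M$, and factorization for the rescaled element yields it for $y$. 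Realize $f=\langle\pi(\cdot)v,v\rangle$ by GNS and set $f_1(x)=f(xy)$ and $f_2(x)=f(x(1-y))$, so that $f=f_1+f_2$. Because $y$ is central, $\pi(y)$ commutes with every $\pi(m)$; since $\pi(y)\ge 0$, $\pi(1-y)\ge 0$ and $\pi(m)\ge 0$ for $m\in M$, the commuting products $\pi(m)\pi(y)$ and $\pi(m)\pi(1-y)$ are positive, so $f_1$ and $f_2$ are $M$-positive. If $f_1(1)=f(y)$ and $f_2(1)=1-f(y)$ are both positive then $f=f_1(1)\,\tfrac{f_1}{f_1(1)}+f_2(1)\,\tfrac{f_2}{f_2(1)}$ is a nontrivial convex combination of two points of $M^\vee$, and extremality forces $f_1=f(y)f$, i.e. $f(xy)=f(y)f(x)$; the boundary cases $f(y)\in\{0,1\}$ are disposed of by noting that an $M$-positive functional vanishing at $1$ vanishes identically, again by archimedeanity.

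I expect the main obstacle to be exactly this factorization lemma, namely extracting multiplicativity on the center from mere extremality of $f$. The GNS realization together with the commutation of $\pi(Z(B))$ with $\pi(B)$ is precisely what makes $f_1$ and $f_2$ positive and thereby lets extremality bite; the reduction to $0\le y\le 1$ and the handling of the degenerate values $f(y)\in\{0,1\}$ are routine but necessary bookkeeping.
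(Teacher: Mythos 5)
Your proof is correct, and its overall skeleton matches the paper's: the classical cycle $(1)\Rightarrow(2)\Rightarrow(3)\Rightarrow(4)\Rightarrow(1)$ via absorbance, GNS and Hahn--Banach separation, then $(4)\Rightarrow(6)$ trivially and $(6)\Rightarrow(5)$ via the key lemma that extreme $M$-positive states are factorizable (the paper's Proposition \ref{propex}) together with weak*-closedness of the factorizable states. There are two genuine local differences. For $(5)\Rightarrow(4)$ the paper extracts a uniform $\eps>0$ on the compact set $\overline{\extr M^\vee}$ and propagates it to all of $M^\vee$ by Krein--Milman, while you attain the minimum of the affine continuous map $f\mapsto f(b)$ at an extreme point via Bauer's minimum principle; these are interchangeable. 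More substantively, inside the factorization lemma the paper proves $M$-positivity of $\omega_1(x)=\omega(xy)/\omega(y)$ abstractly: it shows $\pi(xy)=\pi(x)\pi(y)\ge 0$ for \emph{every} $M$-positive $\ast$-representation $\pi$ and then invokes the equivalence $(2)\Leftrightarrow(3)$ of Proposition \ref{propgen2} to conclude $xy+\eps\in M$ for all $\eps>0$, hence $\omega(xy)\ge 0$; you instead realize the given extreme state $f$ itself by GNS and compute $f_1(m)=\langle \pi(m)\pi(y)v,v\rangle\ge 0$ directly from the commuting product of positive operators. Your route buys independence of the factorization lemma from Proposition \ref{propgen2} (a dependence the paper explicitly flags), at the cost of leaning on the GNS realization of an $M$-positive state for archimedean $M$ --- a fact the paper itself quotes, so nothing is lost; note that boundedness of that representation does use archimedeanity of $M$. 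Your handling of the degenerate cases $f(y)\in\{0,1\}$ is also cleaner: since your GNS argument gives $M$-positivity of $f_1$ and $f_2$ in all cases, a functional with $f_1(1)=0$ dies by the archimedean bound $|f_1(x)|\le l\,f_1(1)$, whereas the paper needs the identity $1-(1-y)^2=\frac12\big(y(2-y)^2+(2-y)y^2\big)\in M$ and two applications of Cauchy--Schwarz; likewise your affine rescaling $y\mapsto (y+N)/(2N)$ replaces the paper's decomposition $y=\frac14\big((1+y)^2-(1-y)^2\big)$, and both reductions are valid.
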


\begin{proof} 
(1) implies (2) because the set $M-b$ is absorbing, hence $-1 \in t(M-b)$ for some $t>0$.
Clearly (2) implies (3). (3) implies (4) because the cyclic $\ast$-representation 
that belongs to $f$ by the $GNS$ construction clearly has the property that
$\pi(m)$ is positive semidefinite for every $m \in M$. (4) implies (1) by the separation
theorem for convex sets. The details can be found in \cite[Theorem 12]{c1} or \cite[Proposition 15]{sch2}
or \cite[Proposition 1.4]{cmn}.

If (5) is true then, by the compactness of $\overline{\extr M^\vee}$, there exists $\eps>0$ such that $f(b) \ge \eps$ for every 
$f \in \overline{\extr M^\vee}$, hence (4) is true by the Krein-Milman theorem. Clearly, (4) implies (6).
By Proposition \ref{propex} below and the fact that the set of all factorizable $M$-positive states is closed, (6) implies (5).
\end{proof}

Similarly, we have the following:

\begin{prop}
\label{propgen2}
For every archimedean quadratic module $M$ in $B$ and every element $b \in B_h$
the following are equivalent:
\begin{enumerate}
\item $b \in \overline{M}$ (the closure w.r.t. the finest locally convex topology),
\item $b +\eps \in M$ for every $\eps>0$,
\item $\pi(b)$ is positive semidefinite for every $M$-positive $\ast$-representation $\pi$ of $B$,
\item $f(b) \ge 0$ for every $f \in M^\vee$,
\item $f(b) \ge 0$ for every $f \in \overline{\extr M^\vee}$,
\item $f(b) \ge 0$ for every factorizable $f \in M^\vee$.
\end{enumerate}
\end{prop}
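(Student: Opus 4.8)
The plan is to prove Proposition \ref{propgen2} by adapting the proof of Proposition \ref{propgen} almost verbatim, replacing strict inequalities by weak ones and the interior $M^\circ$ by the closure $\overline M$; the one place where the argument genuinely simplifies is $(5)\Rightarrow(4)$, where the compactness-induced uniform lower bound is no longer needed. An equivalent and slightly slicker route, which I would at least record, is to reduce everything to Proposition \ref{propgen} applied to the shifted element $b+\eps$. Since $f(1)=1$ for every $f\in M^\vee$, one has $f(b)\ge 0$ for all $f\in M^\vee$ if and only if $f(b+\eps)>0$ for all $f\in M^\vee$ and all $\eps>0$; and $b+\eps\in M$ for all $\eps>0$ if and only if for each $\eps>0$ one has $b+\eps\in\eps'+M$ for some $\eps'>0$ (take $\eps'=\eps/2$). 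In both reformulations Proposition \ref{propgen} applies directly to $b+\eps$, and the conclusion follows by letting $\eps\to 0^+$.

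For a direct proof I would run the cycle $(1)\Rightarrow(2)\Rightarrow(3)\Rightarrow(4)\Rightarrow(1)$. For $(1)\Rightarrow(2)$ I would use that $M$ is a convex cone (it is closed under addition and under multiplication by nonnegative reals, these being squares) with $1$ an interior point. Given $b\in\overline M$ and $\eps>0$, pick a balanced neighbourhood $U$ of $0$ with $1+U\subseteq M$; since $b\in\overline M$ there is $u\in U$ with $b+\eps u\in M$, and then $b+\eps=(b+\eps u)+\eps(1-u)\in M$, because $1-u\in 1+U\subseteq M$ and $M$ is a cone. The converse $(2)\Rightarrow(1)$ is immediate on letting $\eps\to 0^+$. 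For $(2)\Rightarrow(3)$, $M$-positivity of $\pi$ gives $\pi(b)+\eps I=\pi(b+\eps)\ge 0$ for every $\eps>0$, hence $\pi(b)\ge 0$. For $(3)\Rightarrow(4)$ I would attach to each $f\in M^\vee$ its GNS representation, which is $M$-positive and bounded because $M$ is archimedean, so $f(b)=\langle\pi(b)v,v\rangle\ge 0$. Finally $(4)\Rightarrow(1)$ is the Hahn--Banach separation of the closed convex set $\overline M$ from a point $b\notin\overline M$; the separating functional is nonnegative on the cone $M$ and, after normalization at $1$, lies in $M^\vee$ and is negative at $b$, contradicting $(4)$. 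These four implications are exactly those of Proposition \ref{propgen} with $>$ replaced by $\ge$, so I would cite the same references (\cite{c1}, \cite{sch2}, \cite{cmn}) for the details.

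It remains to insert $(5)$ and $(6)$. Both $(4)\Rightarrow(5)$ and $(4)\Rightarrow(6)$ are trivial, since $\overline{\extr M^\vee}\subseteq M^\vee$ and every factorizable $M$-positive state lies in $M^\vee$. For $(5)\Rightarrow(4)$ I would use Krein--Milman: every $f\in M^\vee$ is a weak*-limit of convex combinations of points of $\extr M^\vee$, and since $g\mapsto g(b)$ is weak*-continuous and nonnegativity is preserved under convex combinations and limits, $(5)$ forces $f(b)\ge 0$. Note that here, unlike in Proposition \ref{propgen}, no uniform lower bound is required, so compactness enters only through Krein--Milman. For $(6)\Rightarrow(5)$ I would invoke Proposition \ref{propex}: every extreme $M$-positive state is factorizable, and because the set of factorizable $M$-positive states is weak*-closed, $\overline{\extr M^\vee}$ consists entirely of factorizable states, so $(6)$ yields $(5)$ at once.

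I expect the only genuinely delicate self-contained point to be the normalization in the separation step $(4)\Rightarrow(1)$: one must verify that the separating functional $g$ satisfies $g(1)>0$, so that $g/g(1)\in M^\vee$ is available to contradict $(4)$. This is exactly where the archimedean hypothesis is essential, since $1\in M^\circ$ forces any nonzero $g$ that is nonnegative on $M$ to be strictly positive at $1$ (otherwise $1\pm\delta h\in M$ for small $\delta$ would give $g(h)=0$ for all $h$). Everything else is a routine transcription of Proposition \ref{propgen}, and the substantive work behind $(6)\Rightarrow(5)$ is deferred to Proposition \ref{propex}.
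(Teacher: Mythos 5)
Your proof is correct and is essentially the paper's intended argument: the paper establishes Proposition \ref{propgen2} only by the remark ``Similarly'' following Proposition \ref{propgen}, i.e.\ by exactly the transcription you carry out, with Krein--Milman (where, as you note, no uniform lower bound is needed) for $(5)\Rightarrow(4)$, and Proposition \ref{propex} together with the weak*-closedness of the factorizable states for $(6)\Rightarrow(5)$. You also correctly keep the cycle $(1)\Rightarrow(2)\Rightarrow(3)\Rightarrow(4)\Rightarrow(1)$ independent of Proposition \ref{propex}, which matters because the paper's proof of Proposition \ref{propex} itself invokes the equivalence $(2)\Leftrightarrow(3)$ of this very proposition, so your ordering avoids the potential circularity.
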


The following proposition which extends \cite[Ch. IV, Lemma 4.11]{tak} was used in the proof of equivalences (4)-(6) in Propositions \ref{propgen} and \ref{propgen2}.
Its proof depends on the equivalence of (2) and (3) in Proposition \ref{propgen2}. 

\begin{prop}
\label{propex}
If $M$ is an archimedean quadratic  module in $B$ then all extreme $M$-positive states are factorizable.
\end{prop}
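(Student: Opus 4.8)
The plan is to fix an extreme $M$-positive state $f \in \extr M^\vee$ and a central hermitian element $y \in Z(B)_h$, and to prove the factorization identity $f(xy)=f(x)f(y)$ for all $x \in B_h$. Since $M$ is archimedean, $y$ is bounded, so there is $l>0$ with $l \pm y \in M$; setting $z := (y+l)/(2l)$ I obtain a central hermitian element with $z \in M$ and $1-z \in M$. As $y = 2lz - l$, linearity of $f$ reduces the whole statement to proving $f(xz)=f(x)f(z)$ for such $z$. Note that $xz$ is hermitian whenever $x$ is, because $z$ is central.

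The key step, and the point at which the equivalence of (2) and (3) in Proposition \ref{propgen2} enters, is the claim that $zm \in \overline{M}$ for every $m \in M$. The element $zm$ is hermitian since $z$ is central and hermitian. For an arbitrary $M$-positive $\ast$-representation $\pi$ of $B$, the operators $\pi(z)$ and $\pi(1-z)=1-\pi(z)$ are positive semidefinite, so $0 \le \pi(z) \le 1$, while $\pi(m) \ge 0$; because $z$ is central, $\pi(z)$ commutes with $\pi(m)$, and the product of two commuting positive semidefinite bounded operators is positive semidefinite (write $\pi(zm)=\pi(z)^{1/2}\pi(m)\pi(z)^{1/2}$). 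Hence $\pi(zm) \ge 0$ for every $M$-positive $\pi$, and the implication $(3)\Rightarrow(2)$ of Proposition \ref{propgen2} yields $zm+\eps \in M$ for all $\eps>0$, that is $zm \in \overline{M}$.

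Applying $f$ and using $f(zm) \ge -\eps$ for every $\eps>0$, I conclude $f(zm) \ge 0$ for all $m \in M$. Thus $g(x):=f(zx)$ and $h(x):=f((1-z)x)$ are linear functionals on $B_h$ that are nonnegative on $M$, and $g+h=f$. Put $\lambda := g(1)=f(z) \in [0,1]$. If $0<\lambda<1$, then $f = \lambda\,(\lambda^{-1}g) + (1-\lambda)\,((1-\lambda)^{-1}h)$ writes $f$ as a proper convex combination of two elements of $M^\vee$, so extremality of $f$ forces $\lambda^{-1}g=f$, i.e. $f(zx)=f(z)f(x)$. If $\lambda \in \{0,1\}$ the same conclusion holds, since an $M$-positive functional annihilating $1$ must vanish identically: boundedness $l' \pm x \in M$ gives $\pm g(x) \le l'\,g(1)$, whence $g=\lambda f$. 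Reversing the affine substitution recovers $f(xy)=f(x)f(y)$ for the original $y$.

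The main obstacle is precisely the central claim $zm \in \overline{M}$: it is here that one leaves the algebra, exploits the commutativity of $\pi(z)$ with $\pi(m)$ to multiply positive operators, and returns to $\overline{M}$ through Proposition \ref{propgen2}. The remainder is a standard Radon--Nikodym-type splitting of $f$ through a central $z$, combined with extremality and the archimedean property.
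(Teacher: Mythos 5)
Your proof is correct and follows essentially the same route as the paper's: after normalizing the central element to $z$ with $z,1-z\in M$, your key step --- $zm\in\overline{M}$ for all $m\in M$, proved by passing to $M$-positive $\ast$-representations, multiplying commuting positive semidefinite operators, and returning via the equivalence of (2) and (3) in Proposition~\ref{propgen2} --- is exactly the paper's argument, as is the convex splitting $f=\lambda(\lambda^{-1}g)+(1-\lambda)\bigl((1-\lambda)^{-1}h\bigr)$ resolved by extremality. The only (harmless, arguably cleaner) deviation is in the boundary cases $f(z)\in\{0,1\}$, which the paper treats via a separate Cauchy--Schwarz claim, while you use the archimedean observation that an $M$-positive functional vanishing at $1$ vanishes identically.
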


\begin{proof}
Pick any $\omega \in \extr M^\vee$ and $y \in Z(B)_h$. We claim that $\omega(xy)=\omega(x)\omega(y)$
for every $x \in B_h$. Since $y=\frac14((1+y)^2-(1-y)^2)$ and $(1\pm y)^2  \in M$, we may assume that $y \in M$.
Since $M$ is archimedean, we may also assume that $1-y \in M$. 

\medskip

Claim: If $\omega(y)=0$, then $\omega(y^2)=0$. 
(Equivalently, if $\omega(1-y)=0$, then $\omega((1-y)^2)=0$.)

\medskip

Since $y, 2-y \in M$, it follows that $1-(1-y)^2 = \frac12 \big( y(2-y)^2+(2-y)y^2 \big)\in M$. 
Since $\omega$ is an $M$-positive state, it follows that $\omega((1-y)^2) \le 1$. On the other hand, 
$\omega((1-y)^2)\omega(1^2) \ge \vert \, \omega((1-y)\cdot 1) \vert^2$ by the Cauchy-Schwartz inequality. 
Now, $\omega(y)=0$ implies that $\omega((1-y)^2)=1$, hence $\omega(y^2)=0$.

\medskip

Case 1: If $\omega(y)=0$, then $\omega(xy)=0$ for every $x \in B_h$.
(Equivalently, if $\omega(1-y)=0$, then $\omega(x(1-y))=0$ for every $x \in B_h$.)
Namely, by the Cauchy-Schwartz inequality and the Claim, $\vert \, \omega(xy) \vert^2 \le \omega(x^2)\omega(y^2)=0$.
It follows that $\omega(xy)=\omega(x)\omega(y)$ if $\omega(y)=0$ or $\omega(y)=1$.

\medskip

Case 2 : If $0 < \omega(y) < 1$, then $\omega_1$ and $\omega_2$ defined by
\[
\omega_1(x):= \frac{1}{\omega(y)} \, \omega(xy) \quad \mbox{ and } \quad
\omega_2(x):= \frac{1}{\omega(1-y)} \, \omega(x(1-y))
\]
($x \in B_h$) are $M$-positive states on $B_h$.
Namely, for every $M$-positive $\ast$-represent\-ation $\pi$ of $B$ and every 
$x \in M$, we have that 
$\pi(xy)=\pi(x)\pi(y)$ is a product of two commuting positive 
semidefinite bounded operators, hence a positive semidefinite bounded operator.
By the equivalence of assertions (2) and (3) in Proposition \ref{propgen2}, 
$xy+\eps \in M$ for every $\eps>0$. Since $\omega$ is $M$-positive, 
it follows that $\omega(xy)\ge 0$ as claimed. Similarly, we prove that $\omega_2$
is $M$-positive. Clearly, $\omega=\omega(y)\omega_1+\omega(1-y)\omega_2$. Since $\omega$ is an extreme
point of the set of all $M$-positive states on $B_h$, it follows that
$\omega=\omega_1=\omega_2$. In particular,  $\omega(xy)=\omega(x)\omega(y)$.
\end{proof}

If we apply Proposition \ref{propgen} or \ref{propgen2} to $b=-1$, we get the following
corollary, parts of which were already mentioned above.

\begin{cor}
\label{corgen}
For every archimedean quadratic module $M$ in $B$ the following are equivalent:
\begin{enumerate}
\item $-1 \not\in M$, 
\item there exists an $M$-positive $\ast$-representation  of $B$,
\item there exists an $M$-positive state on $B$,
\item there exists an extreme $M$-positive state on $B$,
\item there exists a factorizable $M$-positive state on $B$.
\end{enumerate}
\end{cor}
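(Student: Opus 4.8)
The plan is to read everything off from the equivalences of Proposition \ref{propgen} specialized to the single element $b=-1$, supplemented by two elementary facts about quadratic modules. The substantive content is the equivalence of (1) and (3), i.e. that $M$ is proper exactly when it carries an $M$-positive state; the remaining equivalences are then standard functional-analytic bookkeeping (GNS, Krein--Milman, and Proposition \ref{propex}). I do not expect a genuine obstacle here, since all the machinery is in place; the only point that requires care is the passage from the strict condition produced by the positivstellensatz to honest membership $-1 \in M$, and the convention on representations.

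First I would record a scaling observation. For real $\eps>0$ we have $\eps = (\sqrt{\eps})^\ast\, 1\, (\sqrt{\eps}) \in M$, and if moreover $-1 \in M$ then $-\eps = (\sqrt{\eps})^\ast(-1)(\sqrt{\eps}) \in M$ as well. Hence $-1 \in M$ implies $-1-\eps = (-1)+(-\eps) \in M$ for every $\eps>0$, while conversely $-1-\eps \in M$ for some $\eps>0$ implies $-1 = (-1-\eps)+\eps \in M$. Thus the three statements ``$-1 \in M$'', ``$-1-\eps \in M$ for some real $\eps>0$'', and ``$-1-\eps \in M$ for all real $\eps>0$'' are equivalent.

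Next I would apply Proposition \ref{propgen} to $b=-1$. Its condition (2) reads $-1 \in \eps+M$, that is $-1-\eps \in M$, for some real $\eps>0$; its condition (4) reads $f(-1)>0$ for every $f \in M^\vee$, but since $f(1)=1$ forces $f(-1)=-1<0$, condition (4) holds if and only if $M^\vee = \emptyset$. The equivalence (2)$\iff$(4) of Proposition \ref{propgen} therefore gives ``$-1-\eps \in M$ for some $\eps>0$'' iff $M^\vee = \emptyset$, and combining this with the scaling observation yields $-1 \in M \iff M^\vee = \emptyset$. Negating both sides produces exactly the equivalence of (1) and (3); this is the step I regard as the heart of the argument, and the scaling bridge of the previous paragraph is precisely what makes the strict conclusion of Proposition \ref{propgen} match the non-strict statement $-1 \in M$.

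Finally I would close the cycle among (2)--(5). Condition (3) gives (2) by the GNS construction, which turns an $M$-positive state into an $M$-positive $\ast$-representation acting on a \emph{nonzero} Hilbert space, the space being nonzero because $\omega(1)=1$; conversely a nonzero $M$-positive $\ast$-representation $\pi$ yields the $M$-positive state $\omega_{\pi,v}(x)=\langle \pi(x)v,v\rangle$ for any unit vector $v$, so (2)$\Rightarrow$(3). Since $M$ archimedean makes $M^\vee$ compact and convex, the Krein--Milman theorem supplies an extreme point whenever $M^\vee \neq \emptyset$, giving (3)$\Rightarrow$(4); then (4)$\Rightarrow$(5) is Proposition \ref{propex}, and (5)$\Rightarrow$(3) is immediate since a factorizable $M$-positive state is in particular an $M$-positive state. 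The only subtlety worth flagging is the convention that $\ast$-representations act on nonzero Hilbert spaces, which is what keeps condition (2) from being vacuous; this is automatic for us, as the representations in play arise from GNS applied to states normalized by $\omega(1)=1$.
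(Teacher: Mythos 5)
Your proof is correct and follows essentially the same route as the paper, whose entire proof is the remark that the corollary results from applying Proposition \ref{propgen} (or \ref{propgen2}) to $b=-1$, together with the GNS correspondence and the Krein--Milman/Proposition \ref{propex} facts already recorded in Section \ref{facsec}. Your scaling observation ($\eps=\sqrt{\eps}^{\,\ast}\,1\,\sqrt{\eps}\in M$ and $-\eps=\sqrt{\eps}^{\,\ast}(-1)\sqrt{\eps}\in M$) simply makes explicit the bridge between $-1\in M$ and $-1\in\eps+M$ that the paper leaves implicit, and your flag about representations on nonzero Hilbert spaces is the right (and only) convention point to note.
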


The following variant of Proposition \ref{propgen} which follows easily from Corollary \ref{corgen} 
was proved in \cite[Theorem 5]{c2}. We could call it archimedean nichtnegativsemidefinitheitsstellensatz
for $\ast$-algebras.

\begin{prop}
\label{another}
For every archimedean proper quadratic module $M$ on 
a real or complex $\ast$-algebra $B$ and for every $x \in B_h$, 
the following are equivalent:
\begin{enumerate}
\item For every $M$-positive $\ast$-representation $\psi$ of $B$,
$\psi(x)$ is not negative semidefinite (i.e. $\langle \psi(x)v,v \rangle >0$ for some $v \in H_\psi$).
\item There exists $k \in \NN$ and $c_1,\ldots,c_k \in B$ 
such that $\sum_{i=1}^k c_i x c_i^\ast \in 1+M$.
\end{enumerate}
\end{prop}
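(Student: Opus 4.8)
The plan is to deduce both implications from Corollary \ref{corgen} applied to the quadratic module generated by $M$ together with $-x$. Concretely, I would first introduce
\[
N := \{\, m + \textstyle\sum_i c_i(-x)c_i^\ast \mid m \in M,\ c_i \in B \,\},
\]
the sum being finite, and check that $N$ is the smallest quadratic module containing $M \cup \{-x\}$. This is routine: $1 \in M \subseteq N$, $N$ is closed under addition, and $b^\ast\big(c_i(-x)c_i^\ast\big)b = (b^\ast c_i)(-x)(b^\ast c_i)^\ast$ together with $b^\ast m b \in M$ shows $b^\ast N b \subseteq N$. Moreover, since $M \subseteq N$ and $M$ is archimedean, for every $b \in B_h$ we have $l \pm b \in M \subseteq N$ for some $l$, so $N$ is archimedean as well.

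The conceptual heart of the argument is the observation that condition (2) says exactly that $N$ fails to be proper. Indeed, $-1 \in N$ means $-1 = m + \sum_i c_i(-x)c_i^\ast$ for some $m \in M$ and $c_i \in B$, which rearranges to $\sum_i c_i x c_i^\ast = 1 + m \in 1 + M$; this is precisely (2). Hence the negation of (2) is equivalent to $-1 \notin N$.

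For $(1) \Rightarrow (2)$ I would argue by contraposition. If (2) fails, then $N$ is an archimedean proper quadratic module, so by Corollary \ref{corgen} there is an $N$-positive $\ast$-representation $\psi$ of $B$. As $M \subseteq N$, this $\psi$ is in particular $M$-positive; and since $-x \in N$ (take $m=0$, $c_1 = 1$) we get $\psi(-x) \ge 0$, i.e. $\psi(x)$ is negative semidefinite. This contradicts (1), so (1) forces (2). For the converse $(2) \Rightarrow (1)$ I would compute directly: let $\psi$ be any $M$-positive $\ast$-representation and suppose $\sum_i c_i x c_i^\ast \in 1 + M$, so that $\psi\big(\sum_i c_i x c_i^\ast\big) \ge \psi(1) = I$ by $M$-positivity. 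Were $\psi(x)$ negative semidefinite, then with $u = \psi(c_i)^\ast w$ each term $\langle \psi(c_i x c_i^\ast)w, w\rangle = \langle \psi(x)u, u\rangle \le 0$, forcing $\langle \psi\big(\sum_i c_i x c_i^\ast\big)w, w\rangle \le 0$ for all $w$ and contradicting $\psi\big(\sum_i c_i x c_i^\ast\big) \ge I$ at any $w \ne 0$.

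I do not expect a serious obstacle here: the proposition is really a repackaging of Corollary \ref{corgen}. The only two points needing care are the algebraic translation of identity (2) into the properness of $N$, and the verification that $N$ inherits the archimedean property from $M$ (which is immediate from $M \subseteq N$). Once $N$ is in hand, everything else is a short verification.
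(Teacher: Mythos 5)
Your proof is correct and takes essentially the same route the paper intends: the paper remarks that Proposition \ref{another} ``follows easily from Corollary \ref{corgen}'', and the very device you use --- passing to the smallest quadratic module $N$ containing $M$ and $-x$, identifying condition (2) with $-1 \in N$, and applying Corollary \ref{corgen} to get an $M$-positive representation making $\psi(x)$ negative semidefinite --- is spelled out almost verbatim in the paper's own proof of the equivalence (1'')$\Leftrightarrow$(2'') in Theorem \ref{mainthm}. Your handling of the involution placement (reconciling $c_i x c_i^\ast$ with the axiom $b^\ast N b \subseteq N$ via the substitution $c \mapsto c^\ast$) and the direct verification of (2)$\Rightarrow$(1) are both sound.
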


\section{Theorems \ref{newthm} and \ref{klep-schweighofer}}

The aim of this section is to prove Theorem \ref{newthm} (see Theorem \ref{mainthm}) and show that it implies
a generalization of Theorem \ref{klep-schweighofer} to compact operators.
We also prove a concrete version of Proposition \ref{another}.

\begin{thm}
\label{mainthm}
Let $R$ be a commutative real algebra with trivial involution,
$A$ a $\ast$-algebra over $F \in \{\RR,\CC\}$ and $M$ an archimedean quadratic module in $B:=R \otimes A$. 
For every element $p$ of $B_h=R \otimes A_h$, the following are equivalent:
\begin{enumerate}
\item $p \in \eps +M$ for some real $\eps>0$.
\item For every multiplicative state $\phi$ on $R$, there exists real $\eps_\phi>0$
such that $(\phi \otimes \id_A)(p) \in \eps_\phi+(\phi \otimes \id_A)(M)$.
\end{enumerate}
The following are also equivalent:
\begin{enumerate}
\item[(1')] $p +\eps \in M$ for every real $\eps>0$.
\item[(2')] For every multiplicative state $\phi$ on $R$ and every real $\eps>0$
we have that $(\phi \otimes \id_A)(p) +\eps \in (\phi \otimes \id_A)(M)$.
\end{enumerate}
Moreover, the following are equivalent:
\begin{enumerate}
\item[(1'')] There exist finitely many $c_i \in B$ such that $\sum_i c_i^\ast p c_i \in 1+M$.
\item[(2'')] For every multiplicative state $\phi$ on $R$ there exist finitely many $d_i \in A$ 
such that $\sum_i d_i^\ast (\phi \otimes \id_A)(p) d_i \in 1+(\phi \otimes \id_A)(M)$.
\end{enumerate}
\end{thm}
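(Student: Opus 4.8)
The plan is to reduce all three equivalences to the characterizations of membership via factorizable states furnished by Propositions \ref{propgen}, \ref{propgen2} and Corollary \ref{corgen}, applied to the archimedean quadratic module $M$ in $B=R\otimes A$. The one genuinely new ingredient is a description of the factorizable $M$-positive states on $B$. First I would record the structural preliminaries: since $R$ carries the trivial involution, $B_h=R\otimes A_h$ and $R\otimes 1\subseteq Z(B)_h$, while every multiplicative state $\phi$ on $R$ is an algebra homomorphism $R\to\RR$, so that $\phi\otimes\id_A\colon B\to A$ is a \emph{surjective unital $\ast$-homomorphism}. Applying $\phi\otimes\id_A$ to the boundedness relations $l\pm(1\otimes a)\in M$ shows that $(\phi\otimes\id_A)(M)$ is itself an archimedean quadratic module, now in $A$.

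The key lemma I would establish is a factorization of factorizable states. If $\omega$ is a factorizable $M$-positive state on $B$, then $\phi(r):=\omega(r\otimes 1)$ is a multiplicative state on $R$, $\tilde\omega(a):=\omega(1\otimes a)$ is a $(\phi\otimes\id_A)(M)$-positive state on $A$, and $\omega=\tilde\omega\circ(\phi\otimes\id_A)$ on $B_h$. Multiplicativity of $\phi$ and the state property of $\tilde\omega$ are immediate from factorizability and $M$-positivity of $\omega$. For the factorization, I would write a hermitian $m\in B_h$ as $\sum_i r_i\otimes a_i$ with $a_i\in A_h$ (possible since $B_h=R\otimes A_h$) and apply factorizability to the central hermitian factors $r_i\otimes 1\in Z(B)_h$ to get $\tilde\omega\big((\phi\otimes\id_A)(m)\big)=\sum_i\phi(r_i)\tilde\omega(a_i)=\omega(m)$; taking $m\in M$ yields the asserted positivity of $\tilde\omega$. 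Conversely, for any multiplicative state $\phi$ on $R$ and any $(\phi\otimes\id_A)(M)$-positive state $\tilde\omega$ on $A$, the composite $\tilde\omega\circ(\phi\otimes\id_A)$ is an $M$-positive state on $B$, precisely because $\phi\otimes\id_A$ is a $\ast$-homomorphism.

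Granting this, (1)$\Leftrightarrow$(2) is read off from the equivalence of assertions (2) and (6) in Proposition \ref{propgen}. For (1)$\Rightarrow$(2) I would apply the homomorphism $\phi\otimes\id_A$ to a representation $p=\eps+m$. For (2)$\Rightarrow$(1) I would take an arbitrary factorizable $M$-positive state $f$, decompose it as $\tilde f\circ(\phi\otimes\id_A)$, invoke (2) for the corresponding $\phi$ to write $(\phi\otimes\id_A)(p)=\eps_\phi+n$ with $n\in(\phi\otimes\id_A)(M)$, and conclude $f(p)=\eps_\phi+\tilde f(n)\ge\eps_\phi>0$; Proposition \ref{propgen} then gives $p\in\eps+M$. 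The pair (1')$\Leftrightarrow$(2') is identical, now using the equivalence (2)$\Leftrightarrow$(6) of Proposition \ref{propgen2} and replacing strict by non-strict inequalities throughout. For (1'')$\Leftrightarrow$(2'') I would pass to the quadratic modules $M_{(-p)}$ generated by $M$ and $-p$ in $B$, and $N_\phi$ generated by $(\phi\otimes\id_A)(M)$ and $-(\phi\otimes\id_A)(p)$ in $A$, both archimedean; by definition (1'') says exactly $-1\in M_{(-p)}$ and (2'') says $-1\in N_\phi$ for every $\phi$. Then (1'')$\Rightarrow$(2'') follows by applying $\phi\otimes\id_A$ to the identity witnessing $-1\in M_{(-p)}$, while the converse is contrapositive: if $-1\notin M_{(-p)}$, Corollary \ref{corgen} supplies a factorizable $M_{(-p)}$-positive state $\omega$, whose factorization yields a state $\tilde\omega$ that (by surjectivity of $\phi\otimes\id_A$) is $N_\phi$-positive, forcing $-1\notin N_\phi$ and the failure of (2'') at that $\phi$.

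The main obstacle is the factorization lemma: one must check that factorizability, which only constrains products with central hermitian elements, already forces $\omega$ to split as $\tilde\omega\circ(\phi\otimes\id_A)$, and that $M$-positivity transfers faithfully to $(\phi\otimes\id_A)(M)$-positivity in both directions. The structural fact making everything cohere is that $\phi\otimes\id_A$ is an honest surjective $\ast$-homomorphism; this is what preserves the stronger module-level positivity (on all $b^\ast p\,b$, not merely on $p$) that is needed in the last equivalence (1'')$\Leftrightarrow$(2''). Once the lemma is in place, each implication is a short application of the already-established archimedean $\ast$-algebraic positivstellensätze.
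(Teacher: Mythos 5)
Your proposal is correct and follows essentially the same route as the paper: the central point in both is that a factorizable $M$-positive state on $B=R\otimes A$ splits as $\sigma\circ(\phi\otimes\id_A)$ with $\phi$ a multiplicative state on $R$ (using $R\otimes 1\subseteq Z(B)_h$), after which (1)$\Leftrightarrow$(2) and (1')$\Leftrightarrow$(2') follow from the equivalence of conditions (2) and (6) in Propositions \ref{propgen} and \ref{propgen2}, and (1'')$\Leftrightarrow$(2'') follows by applying Corollary \ref{corgen} to the quadratic module generated by $M$ and $-p$ (your $M_{(-p)}$ is exactly the paper's $N$, with the same explicit description of its image under $\phi\otimes\id_A$). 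The only cosmetic difference is that you deduce $f(p)>0$ for factorizable $f$ directly from (2), whereas the paper first reformulates (2) via Proposition \ref{propgen} applied in $A$; this changes nothing of substance.
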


\begin{proof}
Clearly (1) implies (2). We will prove the converse in several steps.
Note that for every multiplicative state $\phi$ on $R$, the mapping $\phi \otimes \id_A \colon B \to A$ is
a surjective homomorphism of $\ast$-algebras, hence $(\phi \otimes \id_A)(M)$ is 
an archimedean quadratic module in $A$. Replacing
$B$, $M$, $f$ and $p$ in Proposition \ref{propgen} with $A$, $(\phi \otimes \id_A)(M)$, $\sigma$ and $(\phi \otimes \id_A)(p)$,
we see that (2) is equivalent to
\begin{enumerate}
\item[(A)] For every multiplicative state $\phi$ on $R$ and every state $\sigma$ on $A_h$ such that
$\sigma((\phi \otimes \id_A)(M)) \ge 0$ we have that $\sigma((\phi \otimes \id_A)(p)) > 0$.
\end{enumerate}
Note that $(\phi \otimes \sigma)(r \otimes a)=\phi(r)\sigma(a)=\sigma(\phi(r)a)=\sigma((\phi \otimes \id_A)(r \otimes a))$
for every $r \in R$ and $a \in A_h$. It follows that $\phi \otimes \sigma=\sigma \circ (\phi \otimes \id_A)$. Thus, (A)
is equivalent to
\begin{enumerate}
\item[(B)] for every $M$-positive state on $R \otimes A_h$ of the form $\omega=\phi \otimes \sigma$ 
where $\phi$ is multiplicative, we have that $\omega(p) > 0$.
\end{enumerate}
Since $R \otimes 1 \subseteq Z(B)$, every factorizable state $\omega$ 
satisfies $\omega(r \otimes a)=\omega(r \otimes 1)\omega(1 \otimes a)$
and $\omega(rs \otimes 1)=\omega(r \otimes 1)\omega(s \otimes 1)$
for any $r,s \in R$ and $a \in A_h$. Hence $\omega=\phi \otimes \sigma$
where $\phi$ is a multiplicative state on $R$ and $\sigma$ is a state on $A_h$.
Therefore, (B) implies that
\begin{enumerate}
\item[(C)] $\omega(p) > 0$ for every factorizable $\omega \in M^\vee$.
\end{enumerate}
By Proposition \ref{propgen}, (C) is equivalent to (1).

The equivalence of (1') and (2') can be proved in a similar way using Proposition \ref{propgen2}.
It can also be easily deduced from the equivalence of (1) and (2).

Clearly (1'') implies (2''). Conversely, if (1'') is false, then $-1 \not\in N$
where $N:=\{m-\sum c_i^\ast p c_i \mid m\in M, c_i \in B\}$ 
is the smallest quadratic module in $B$ which contains $M$ and $-p$.
By Corollary \ref{corgen}, there exists a factorizable state $\omega \in N^\vee$.
From the proof of (1) $\Leftrightarrow$ (2), we know that $\omega=\phi \otimes \sigma
=\sigma \circ (\phi \otimes \id_A)$ for a multiplicative state $\phi$ on $R$ and a state
$\sigma$ on $A$. Since $\sigma((\phi \otimes \id_A)(N))=\omega(N)\ge 0$, it follows
 that $-1 \not\in (\phi \otimes \id_A)(N)$. Since
$(\phi \otimes \id_A)(N)=\{(\phi \otimes \id_A)(m)-\sum_j d_j^\ast (\phi \otimes \id_A)(p)d_j \mid m \in M,d_j\in A\}$,
we get that (2'') is false.
\end{proof}

For every Hilbert space $H$ we write $B(H)$ for the set of all bounded operators on $H$,
$P(H)=\sos{B(H)}$ for the set of all positive semidefinite operators on $H$ and $K(H)$
for the set of all compact operators on $H$.

\begin{lem}
\label{compact}
Let $H$ be a separable Hilbert space and $M$ a quadratic module in $B(H)$
which is not contained in $P(H)$. Then $\overline{M}$ contains all hermitian compact operators,
i.e. $K(H)_h \subseteq \overline{M}$.
\end{lem}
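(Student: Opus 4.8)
The plan is to reduce the statement, via the bipolar theorem, to a fact about linear functionals, using two structural observations about $M$. First I would record that $P(H)\subseteq M$: since $1\in M$ and $b^\ast M b\subseteq M$, we get $b^\ast b=b^\ast 1 b\in M$ for every $b\in B(H)$, and taking finite sums exhausts $P(H)=\sos{B(H)}$. Second, I would exploit the hypothesis $M\not\subseteq P(H)$ to produce negatives of projections: choose $m\in M\setminus P(H)$, so there is a unit vector $v_0$ with $\langle m v_0,v_0\rangle=:-c<0$. For an arbitrary unit vector $w$ let $b_w=v_0 w^\ast$ be the rank-one operator $x\mapsto\langle x,w\rangle v_0$; a direct computation gives $b_w^\ast m b_w=\langle m v_0,v_0\rangle P_w=-c\,P_w$, where $P_w$ denotes the orthogonal projection onto the line through $w$. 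As $M$ is a quadratic module it is a convex cone, so $-P_w=c^{-1}(-cP_w)\in M$ for every unit vector $w$, and hence $-F\in M$ for every positive semidefinite finite-rank $F$.

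Next I would invoke duality. Because the finest locally convex topology on $B(H)_h$ has full algebraic dual (Section \ref{facsec}), the bipolar theorem for the convex cone $M$ yields
\[
\overline{M}=\{x\in B(H)_h \mid f(x)\ge 0 \text{ for every linear } f\colon B(H)_h\to\RR \text{ with } f(M)\ge 0\}.
\]
Fixing such an $f$, the inclusion $P(H)\subseteq M$ makes $f$ a positive functional; and since $P_w\in P(H)\subseteq M$ while also $-P_w\in M$, we get $f(P_w)=0$ for every unit vector $w$. By linearity and the spectral theorem for hermitian finite-rank operators, $f$ then vanishes on all of $B(H)_h$ of finite rank.

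The only delicate point is passing from finite rank to compact, since the finest locally convex topology is strictly finer than the norm topology, so norm-density of the finite-rank operators is not a priori usable against $\overline{M}$. I expect this to be the main obstacle, and I would resolve it through automatic continuity: extending $f$ complex-linearly to a positive linear functional on the unital C*-algebra $B(H)$, it is automatically norm-bounded (with $\|f\|=f(1)$). Thus $f$ is norm-continuous, and since the finite-rank hermitian operators are norm-dense in $K(H)_h$, we conclude $f(K)=0$ for every $K\in K(H)_h$. In particular $f(K)\ge 0$ for every admissible $f$, so by the bipolar description $K(H)_h\subseteq\overline{M}$, as required.
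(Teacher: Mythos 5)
Your proof is correct. Its first half coincides in substance with the paper's: both extract a rank-one projection $P$ with $-P\in M$ by compressing an element $m\in M\setminus P(H)$ to a vector where it is negative (you conjugate by the rank-one operator $v_0w^\ast$, the paper conjugates the fixed $-P$ by unitaries --- the same mechanism), and both conclude that $M$ contains all hermitian finite-rank operators. The passage from finite rank to compact is where you genuinely diverge. The paper stays primal and explicit: for positive compact $K$ it observes that $-K+\eps\sqrt{K}\in M$ for every $\eps>0$, because splitting at the eigenvalue threshold $\eps^2$ writes this operator as (positive compact) $+$ (hermitian finite rank), both already in $M$; since $-K+\eps\sqrt{K}\to -K$ along a line, $-K\in\overline{M}$ even in the finest locally convex topology. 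You instead dualize: by the bipolar theorem (legitimate here, since in the finest locally convex topology the closed convex cone $\overline{M}$ is exactly the double dual cone, and all functionals are continuous), it suffices to show every linear $f$ with $f(M)\ge 0$ kills $K(H)_h$, which you get from $f(P_w)=0$ plus automatic norm-continuity of positive functionals and norm-density of finite-rank operators in $K(H)_h$. You correctly identified the one real trap --- that norm-density cannot be played directly against the finest-locally-convex closure, which is contained in the norm closure --- and your continuity argument disposes of it soundly. The paper's route is more elementary and self-contained (no duality, no C*-theory, and it exhibits explicit elements of $M$ converging to $-K$); yours is conceptually cleaner and explains \emph{why} the lemma holds (every $M$-positive functional is automatically norm-continuous and vanishes on rank-one projections, hence on all compacts), and it meshes with the dual characterizations the paper itself uses in Proposition \ref{propgen2}. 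One streamlining: you do not need the complexification or the general automatic-boundedness theorem for C*-algebras --- since $\Vert a\Vert\pm a\in P(H)\subseteq M$ for hermitian $a$, positivity on $M$ directly gives $\vert f(a)\vert\le f(1)\Vert a\Vert$, which is all the continuity you use (and also covers the real-scalar case without comment).
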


\begin{proof}
Let $M$ be a quadratic module in $B(H)$ which is not contained in $P(H)$.
Pick an arbitrary operator $L$ in $M \setminus P(H)$ and a vector $v \in H$ such that $\langle v, Lv \rangle < 0$.
Write $P$ for the orthogonal projection of $H$ on the span of $v$. Clearly, 
$P L P = \lambda P$ where $\lambda < 0$, hence $-P \in M$. If $Q$ is an orthogonal projection of rank $1$,
then $Q=U^\ast P U$ for some unitary $U$, hence $-Q \in M$. Since also $Q=Q^\ast Q \in M$, $M$ contains all
hermitian operators of rank $1$. Therefore, $M$ contains all finite rank operators.
Pick any $K \in K(H)_h \cap P(H)$ and note that $\sqrt{K} \in K(H)_h \cap P(H)$ as well.
Clearly, $-K+ \eps \sqrt{K} \in M$  for every $\eps >0$ since it is a sum of an element from
$K(H)_h \cap P(H)$ and a finite rank operator (check the eigenvalues). It follows that $-K \in \overline{M}$.
It is also clear that every element of $K(H)_h$ is a difference of two elements from 
$K(H)_h \cap P(H)$, hence $K(H)_h \subseteq \overline{M}$.
\end{proof}

As the first application of Theorem \ref{mainthm} and Lemma \ref{compact}, 
we prove the following generalization of Theorem  \ref{klep-schweighofer}.
By Lemma \ref{arch} below, Theorem  \ref{klep-schweighofer} corresponds 
to the case $R=\rx$ and $H$ finite-dimensional, i.e. in the
finite-dimensional case we can omit the assumption on $p$.

\begin{thm}
\label{ksthm}
Let $R$ be a commutative real algebra with trivial involution,
$H$a separable Hilbert space, $M$ an archimedean quadratic module in
$R \otimes B(H)$ and $p$ an element of $R \otimes B(H)$. 

If for every multiplicative state $\phi$ on $R$ there exists a real $\eta_\phi>0$ such that
$(\phi \otimes \id_{B(H)})(p) \in \eta_\phi+P(H)+K(H)_h$ 
(e.g. if $p \in \eta+R \otimes K(H)_h$ for some $\eta>0$) 
then the following are equivalent:
\begin{enumerate}
\item $p \in M^\circ$,
\item For every multiplicative state $\phi$ on $R$ such that $(\phi \otimes \id_{B(H)})(M) \subseteq P(H)$,
there exists $\eps_\phi>0$ such that $(\phi \otimes \id_{B(H)})(p) \in \eps_\phi+P(H)$.
\end{enumerate}

If $(\phi \otimes \id_{B(H)})(p) \in P(H)+K(H)_h$ for every multiplicative state $\phi$ on $R$
(e.g. if $p \in R \otimes K(H)_h$) 
then the following are equivalent:
\begin{enumerate}
\item[(1')] $p \in \overline{M}$,
\item[(2')] For every multiplicative state $\phi$ on $R$ such that $(\phi \otimes \id_{B(H)})(M) \subseteq P(H)$,
we have that $(\phi \otimes \id_{B(H)})(p) \in P(H)$.
\end{enumerate}
\end{thm}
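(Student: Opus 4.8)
The plan is to reduce everything to Theorem \ref{mainthm} applied with $A=B(H)$, after rewriting the topological conditions $(1)$ and $(1')$ algebraically. Since $M$ is an archimedean quadratic module in $B:=R\otimes B(H)$, Proposition \ref{propgen} gives $p\in M^\circ$ iff $p\in\eps+M$ for some real $\eps>0$, and Proposition \ref{propgen2} gives $p\in\overline M$ iff $p+\eps\in M$ for every real $\eps>0$. Hence $(1)$ is exactly condition $(1)$ of Theorem \ref{mainthm} and $(1')$ is exactly condition $(1')$ there, so it suffices to match the fiberwise conditions $(2)$ and $(2')$ of that theorem with the conditions $(2)$ and $(2')$ of the present statement.

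Fix a multiplicative state $\phi$ on $R$ and set $N_\phi:=(\phi\otimes\id_{B(H)})(M)$ and $q_\phi:=(\phi\otimes\id_{B(H)})(p)$. As observed in the proof of Theorem \ref{mainthm}, $\phi\otimes\id_{B(H)}$ is a surjective $\ast$-homomorphism, so $N_\phi$ is an archimedean quadratic module in $B(H)$; being a quadratic module it contains $P(H)=\sos{B(H)}$. I would split the multiplicative states according to whether $N_\phi\subseteq P(H)$. If it is, then $N_\phi=P(H)$, so the Theorem \ref{mainthm} fiber condition $q_\phi\in\eps_\phi+N_\phi$ (resp.\ $q_\phi+\eps\in N_\phi$ for all $\eps>0$) reads $q_\phi\in\eps_\phi+P(H)$ (resp.\ $q_\phi\in\overline{P(H)}$); applying Proposition \ref{propgen2} to the archimedean module $N_\phi=P(H)$ together with the identity representation shows $\overline{P(H)}=P(H)$, so these are precisely conditions $(2)$ and $(2')$ of the theorem, restricted to the states over which those conditions range.

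The essential case is $N_\phi\not\subseteq P(H)$, where $(2)$ and $(2')$ say nothing and I must instead verify that the Theorem \ref{mainthm} fiber condition holds automatically. This is exactly where Lemma \ref{compact} enters: it yields $K(H)_h\subseteq\overline{N_\phi}$, so by Proposition \ref{propgen2} every $k\in K(H)_h$ satisfies $k+\delta\in N_\phi$ for all $\delta>0$. Using the Part 1 hypothesis on $p$, write $q_\phi=\eta_\phi+p_1+k$ with $p_1\in P(H)\subseteq N_\phi$ and $k\in K(H)_h$; then $q_\phi=\eta_\phi/2+p_1+(k+\eta_\phi/2)\in\eta_\phi/2+N_\phi$, so the fiber condition $(2)$ holds with $\eps_\phi=\eta_\phi/2>0$. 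For Part 2, where $q_\phi=p_1+k$, the same absorption gives $q_\phi+\eps=p_1+(k+\eps)\in N_\phi$ for every $\eps>0$, which is the fiber condition $(2')$.

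Combining the two cases, condition $(2)$ of Theorem \ref{mainthm} over all $\phi$ holds iff condition $(2)$ of the present theorem does (it is the content of the positive fibers $N_\phi=P(H)$ and automatic elsewhere), and likewise for the primed versions; this proves both equivalences. I expect the main difficulty to be bookkeeping rather than any hard estimate: namely pinning down that the hypothesis on $p$ is precisely strong enough to force the Theorem \ref{mainthm} fiber condition on the fibers where $N_\phi\not\subseteq P(H)$, and checking $\overline{P(H)}=P(H)$ so that the ``$p\in\overline M$'' version of the result comes out with $P(H)$ itself rather than with its closure.
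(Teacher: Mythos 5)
Your proposal is correct and follows essentially the same route as the paper's proof: translate (1) and (1') into the algebraic conditions of Theorem \ref{mainthm} via Propositions \ref{propgen} and \ref{propgen2}, split the multiplicative states according to whether $(\phi\otimes\id_{B(H)})(M)\subseteq P(H)$, and on the non-positive fibers use Lemma \ref{compact} together with the hypothesis on $p$ to absorb $\eta_\phi/2$ (resp.\ $\eps$) into the image module. The only difference is that you spell out two points the paper leaves implicit, namely that $N_\phi=P(H)$ on the positive fibers and that $\overline{P(H)}=P(H)$, both of which you justify correctly.
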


\begin{proof}
Suppose that (1) is true, i.e.  $p \in \eps+M$ for some $\eps>0$.
For every multiplicative state $\phi$ on $R$ such that $(\phi \otimes \id_{B(H)})(M) \subseteq P(H)$
we have that $(\phi \otimes \id_{B(H)})(p) \in (\phi \otimes \id_{B(H)})(\eps+M) \subseteq \eps + P(H)$,
hence (2) is true.
Conversely, suppose that (2) is true. We claim that for every multiplicative state $\phi$ on $R$
there exists $\eps_\phi>0$ such that $(\phi \otimes \id_{B(H)})(p) \in \eps_\phi+(\phi \otimes \id_{B(H)})(M)$. 
Then it follows by Theorem \ref{mainthm} that (1) is true. If $(\phi \otimes \id_{B(H)})(M) \subseteq P(H)$, then 
$(\phi \otimes \id_{B(H)})(p) \in \eps_\phi+P(H) \subseteq \eps_\phi+(\phi \otimes \id_{B(H)})(M)$
by (2) and the fact that $(\phi \otimes \id_{B(H)})(M)$ is a quadratic module in $B(H)$.
On the other hand, if $(\phi \otimes \id_{B(H)})(M) \not\subseteq P(H)$, then 
$K(H)_h \subseteq \overline{(\phi \otimes \id_{B(H)})(M)}$ by Lemma \ref{compact}.
The assumption $(\phi \otimes \id_{B(H)})(p) \in \eta_\phi+P(H)+K(H)_h$ for some $\eta_\phi>0$
then implies that $(\phi \otimes \id_{B(H)})(p) \in \frac{\eta_\phi}{2}+(\phi \otimes \id_{B(H)})(M)$ as claimed. 
The proof of the equivalence $(1') \Leftrightarrow (2')$ is similar.
\end{proof}

In the infinite-dimensional case, the assumption on $p$ cannot be omitted as the following example shows:
\begin{ex}
Let $H$ be an infinite-dimensional separable Hilbert space, $0 \ne E \in B(H)_h$ an orthogonal projection of finite rank
and $T$ an element of $B(H)_h$ such that $T \not\in P(H)+K(H)_h$. Since the quadratic module $\sos{B(H)/K(H)}$ is closed,
also $P(H)+K(H)_h$ is closed, hence there exists a real $\eps>0$ such that $T+\eps   \not\in P(H)+K(H)_h$.
Write $p_1=-x^2 E$, $p_2=1-x^2$ and $p=\eps +x^2 T$. Let $M$ be the quadratic module in $\RR[x] \otimes B(H)$ 
generated by $p_1$ and $p_2$. Since $p_2 \in M$, it follows from Lemma \ref{arch} below that $M$ is archimedean. 
For every point $a \in \RR$ such that $p_1(a) \ge 0$ and $p_2(a) \ge 0$ we have that $a=0$, hence $p(a)=\eps$. 
Therefore, assertion (2) of Theorem \ref{ksthm} is true for our $M$ and $p$. Assertion (1), however,
fails for our $M$ and $p$. If it was true then there would exist finitely many 
$q_i,u_j,v_k \in \RR[x] \otimes B(H)$ and a real $\eta>0$ such that
$p=\eta +\sum_i q_i^\ast q_i+\sum_j u_j^\ast p_1 u_j+\sum_k v_k^\ast p_2 v_k$. For $x=1$, we get
$\eps +T=\eta +\sum_i q_i(1)^\ast q_i(1)-\sum_j u_j(1)^\ast E u_j(1)$. The first two terms belong to $P(H)$
and the last term belongs to $K(H)_h$, a contradiction with the choice of $T$.
\end{ex}

We finish this section with a concrete version of Proposition \ref{another} in the spirit of Theorem \ref{klep-schweighofer}.
For $R=\rx$, we get \cite[Corollary 22]{ks}. 

\begin{thm}
\label{another1}
Let $R$ be a commutative real algebra with trivial involution, $\nu \in \NN$, and
$M$ an archimedean quadratic module in $M_\nu(R)$. For every element $p \in M_\nu(R)_h$,
the following are equivalent:
\begin{enumerate}
\item There are finitely many $c_i \in M_\nu(R)$ such that $\sum_i c_i^\ast p c_i \in 1+M$.
\item For every multiplicative state $\phi$ on $R$ such that $(\phi \otimes \id)(m)$ is
positive semidefinite for all $m \in M$, we have that the operator $(\phi \otimes \id)(p)$ 
is not negative semidefinite.
\end{enumerate}
\end{thm}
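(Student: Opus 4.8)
The plan is to deduce Theorem \ref{another1} from the abstract result in Proposition \ref{another} by specializing the $\ast$-algebra and by translating the representation-theoretic condition (2) of that proposition into the concrete multiplicative-state condition (2) appearing here. Concretely, I would take $B = M_\nu(R) = R \otimes M_\nu(\RR)$ (or $R \otimes M_\nu(\CC)$), with $M$ the given archimedean quadratic module and $x = p$. The first thing to check is that $M$ is \emph{proper}: if $-1 \in M$, then condition (2) of Proposition \ref{another} applied to \emph{every} element is automatically unobtainable in the usual way, so I must handle this degenerate case separately. In fact if $-1 \in M$ then (1) holds trivially (take the empty sum, since $0 = 1 + (-1) \in 1 + M$, or rather $\sum_i c_i^\ast p c_i = 0 \in 1 + M$), and I should check that (2) is then vacuously true because there are \emph{no} multiplicative states $\phi$ with $(\phi\otimes\id)(M)\subseteq P(H)$: applying $\phi\otimes\id$ to $-1\in M$ would force $-I$ to be positive semidefinite. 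So the two conditions agree in the improper case, and I may assume $M$ is proper and invoke Proposition \ref{another} directly.

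With $M$ proper and archimedean, Proposition \ref{another} tells me that (1) here is equivalent to: for every $M$-positive $\ast$-representation $\psi$ of $B=M_\nu(R)$, the operator $\psi(p)$ is not negative semidefinite. The heart of the argument is therefore to identify the $M$-positive $\ast$-representations of $M_\nu(R)$ with the data in condition (2). This is exactly the kind of decomposition carried out in the proof of Theorem \ref{mainthm}: since $R \otimes 1$ lies in the center $Z(B)$ and $R$ is commutative with trivial involution, I would argue that every $\ast$-representation of $R\otimes M_\nu(F)$ factors, up to the central action, through evaluation of a multiplicative character on $R$. The cleanest route is to pass to states: every $M$-positive $\ast$-representation gives rise to $M$-positive states via unit vectors, and conversely, by the GNS construction applied to extreme (hence, by Proposition \ref{propex}, factorizable) $M$-positive states. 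A factorizable $M$-positive state on $M_\nu(R)$ splits as $\phi \otimes \sigma$ with $\phi$ a multiplicative state on $R$ and $\sigma$ a state on $M_\nu(F)_h$, and the factorization forces $(\phi\otimes\id)(M)\subseteq P(H)$ whenever the state is $M$-positive.

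The main obstacle, and the step requiring the most care, is this passage between the ``for every $M$-positive representation $\psi$, $\psi(p)$ is not $\le 0$'' formulation and the pointwise condition (2). I would organize it as follows. Suppose (1) fails; then by Proposition \ref{another} there is an $M$-positive representation $\psi$ and a unit vector $v$ with $\langle \psi(p)v,v\rangle \le 0$, so the state $\omega(x)=\langle\psi(x)v,v\rangle$ is $M$-positive with $\omega(p)\le 0$. Restricting $\omega$ to the center $R\otimes 1$ yields a multiplicative state $\phi$ (using that $M_\nu(F)$ is simple, so the center acts by scalars in any irreducible summand, and that $\omega(rs\otimes 1)=\omega(r\otimes1)\omega(s\otimes1)$ on the relevant GNS representation); the corresponding fiber representation of $M_\nu(F)$ is $(\phi\otimes\id)$-pushed, $M$-positivity of $\psi$ gives $(\phi\otimes\id)(M)\subseteq P(H)$, and $\langle (\phi\otimes\id)(p)\,v,v\rangle\le 0$ shows $(\phi\otimes\id)(p)$ is negative semidefinite at $v$, contradicting (2). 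The reverse direction is the easier one: given $\phi$ with $(\phi\otimes\id)(M)\subseteq P(H)$ and $(\phi\otimes\id)(p)$ negative semidefinite, the representation $x\mapsto (\phi\otimes\id)(x)$ is an $M$-positive representation witnessing the failure of (1). The delicate point throughout is justifying that the central character $\phi$ is genuinely multiplicative and that the fiber genuinely realizes a representation of $M_\nu(F)=M_\nu(\RR)$ or $M_\nu(\CC)$ on a Hilbert space; here the simplicity of the matrix algebra and the argument in the proof of Theorem \ref{mainthm} (that $\phi\otimes\id_A$ is a surjective $\ast$-homomorphism carrying $M$ to an archimedean quadratic module) do the work, so I would lean on those rather than reprove them.
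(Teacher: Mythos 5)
Your handling of the improper case and your easy direction are fine (given $\phi$ with $(\phi\otimes\id)(M)$ positive semidefinite and $(\phi\otimes\id)(p)$ negative semidefinite, the representation $\phi\otimes\id$ witnesses the failure of condition (1) of Proposition \ref{another}). But the main direction has two genuine gaps. First, a quantifier error at the end: the negation of (1) of Proposition \ref{another} hands you a representation $\psi$ with $\psi(p)$ negative semidefinite \emph{globally}, yet you only carry along a single vector $v$ with $\langle\psi(p)v,v\rangle\le 0$, and you conclude by saying this makes $(\phi\otimes\id)(p)$ ``negative semidefinite at $v$,'' contradicting (2). That is not a contradiction: condition (2) asserts the fiber matrix has at least one strictly positive eigenvalue, which is perfectly compatible with the quadratic form being $\le 0$ at some vector (consider $\diag(1,-1)$). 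Second, the step producing $\phi$ fails: the restriction of a vector state $\omega=\langle\psi(\cdot)v,v\rangle$ to the center $R\otimes 1$ is \emph{not} multiplicative in general --- it is an average of characters (take $\psi$ to be multiplication operators on $L^2(\mu)$). Multiplicativity on the center is exactly factorizability, which Proposition \ref{propex} guarantees only for \emph{extreme} $M$-positive states; a generic vector state is a mixture, and your parenthetical appeal to simplicity of $M_\nu(F)$ and ``irreducible summands'' does not repair this, since $\psi$ need not decompose into irreducibles without direct-integral machinery. And even if you had a factorizable $M$-positive $\omega=\phi\otimes\sigma$ with $\omega(p)\le 0$, that only gives $\sigma((\phi\otimes\id)(p))\le 0$, again not negative semidefiniteness of the fiber.

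The paper avoids both problems by never passing through a representation with $\psi(p)\le 0$. It invokes the equivalence (1'')$\Leftrightarrow$(2'') of Theorem \ref{mainthm}, whose proof enlarges the quadratic module: if (1) fails then $-1\notin N$ where $N=\{m-\sum_i c_i^\ast p c_i \mid m\in M,\ c_i\in B\}$, Corollary \ref{corgen} yields a \emph{factorizable} $N$-positive state $\omega=\phi\otimes\sigma$, and hence $-1\notin(\phi\otimes\id)(N)$ for a genuinely multiplicative $\phi$. The remaining translation of (2'') into condition (2) rests on two finite-dimensional facts that your proposal never addresses: a matrix $C\in M_\nu(\RR)_h$ is not negative semidefinite iff $\sum_i D_i^\ast C D_i - I$ is positive semidefinite for some matrices $D_i$, and every quadratic module in $M_\nu(\RR)$ is either the cone $\sos{M_\nu(\RR)}$ of positive semidefinite matrices or all of $M_\nu(\RR)_h$. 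The dichotomy handles the fibers with $(\phi\otimes\id)(M)$ not contained in the positive semidefinite cone, where (2) imposes no condition but (2'') holds automatically; applied to $(\phi\otimes\id)(N)$, which omits $-1$, it forces $(\phi\otimes\id)(N)=\sos{M_\nu(\RR)}$, giving simultaneously $(\phi\otimes\id)(M)\subseteq\sos{M_\nu(\RR)}$ and (taking $c_1=1$, so $-p\in N$) that $(\phi\otimes\id)(p)$ is negative semidefinite, negating (2). If you want to keep your framework built on Proposition \ref{another}, the fix is to replace the vector-state argument by exactly this application of Corollary \ref{corgen} to $N$ together with the dichotomy.
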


\begin{proof} 
Write $A=M_\nu(\RR)$. Clearly, a matrix $C \in A_h$ is not negative semidefinite (i.e. it has at least one strictly positive eigenvalue) iff
there exist matrices $D_i \in A$ such that $\sum_i D_i^\ast C D_i -I$ is positive semidefinite. It follows that a quadratic module
$M$ in $A$ which is different from $\sos{A}$ contains $-I$, hence it is equal to $A_h$. (This also follows from Lemma \ref{compact}.) 
Now we use equivalence (1'') $\Leftrightarrow$ (2'') of Theorem \ref{mainthm}.
\end{proof}

\section{Theorem \ref{ambrozie-vasilescu}}
\label{avsec}

Recall that a $\ast$-algebra $A$ is \textit{algebraically bounded} if the quadratic module $\sos{A}$ is archimedean. 
For an element $a \in A_h$ we say that $a \ge 0$ iff  $a+\eps \in \sos{A}$
for all real $\eps >0$ (i.e. iff $a \in \overline{\sos{A}}$) and that $a>0$ iff $a \in \eps+ \sos{A}$ for some
real $\eps>0$ (i.e. iff $a \in \sos{A}^\circ$).
It is well-known that every Banach $\ast$-algebra is algebraically bounded.

The aim of this section is to deduce the following theorem from Theorem \ref{mainthm} and to show that
it implies Theorem \ref{ambrozie-vasilescu}. Other applications of Theorem \ref{mainthm} will be discussed in section \ref{finsec}.

\begin{thm}
\label{avthm}
Let $R$ be a commutative real algebra with trivial involution and
$A$ an algebraically bounded $\ast$-algebra over $F \in \{\RR,\CC\}$.  
Let $U$ be an inner product space over $F$, $\L(U)$ the $\ast$-algebra of all adjointable linear operators on $U$,
$\L(U)_+$ its subset of positive semidefinite operators, and $M$ 
an archimedean quadratic module in $R \otimes \L(U)$. 

Write $B:=R \otimes A$ and consider the vector space $B \otimes U$
as left $R \otimes \L(U)$ right $B$ bimodule which is equipped with the biadditive form $\langle \cdot , \cdot \rangle$ defined by
$\langle b_1 \otimes u_1, b_2 \otimes u_2 \rangle := b_1^\ast b_2 \langle u_1,u_2 \rangle_U$.
Write $M'$ for the subset of $B_h$ which consists of all finite sums of elements of the form 
$\langle q, mq \rangle$ where $m \in M$ and $q \in B \otimes U$.

We claim that the set $M'$ is an archimedean quadratic module and that for every element 
$p \in R \otimes A_h$ the following are equivalent:
\begin{enumerate}
\item $p \in \eps+M'$ for some real $\eps>0$.
\item For every multiplicative state $\phi$ on $R$ such that $(\phi \otimes \id_{\L(U)})(M) \subseteq \L(U)_+$,
we have that $(\phi \otimes \id_A)(p) >0$.
\end{enumerate}
Moreover, the following are equivalent:
\begin{enumerate}
\item[(1')] $p \in \overline{M'}$.
\item[(2')] For every multiplicative state $\phi$ on $R$ such that $(\phi \otimes \id_{\L(U)})(M) \subseteq \L(U)_+$,
we have that $(\phi \otimes \id_A)(p) \ge 0$.
\end{enumerate}
Finally, the following are equivalent:
\begin{enumerate}
\item[(1'')] There exist finitely many $c_i \in B$ such that $\sum_i c_i^\ast p c_i \in 1+M'$.
\item[(2'')] For every multiplicative state $\phi$ on $R$ such that $(\phi \otimes \id_{\L(U)})(M) \subseteq \L(U)_+$,
there exist finitely many elements $d_i \in A$ such that $\sum_i d_i^\ast (\phi \otimes \id_A)(p) d_i-1 \ge 0$.
\end{enumerate}
\end{thm}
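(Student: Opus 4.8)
The plan is to recognise $M'$ as the quadratic module induced in $B=R\otimes A$ from $M$ along the inner-product bimodule $B\otimes U$, to check that it is archimedean, to apply Theorem~\ref{mainthm} to the triple $(B,M',p)$, and finally to compute the fibres $(\phi\otimes\id_A)(M')$ explicitly. The explicit computation will split into exactly the two cases built into hypotheses (2), (2$'$), (2$''$): whether or not $(\phi\otimes\id_{\L(U)})(M)\subseteq\L(U)_+$.

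First I would record the adjunction identities for the $B$-valued form: $\langle aq,q'\rangle=\langle q,a^\ast q'\rangle$ for $a\in R\otimes\L(U)$, and $\langle q,q'\rangle b=\langle q,q'b\rangle$, $b^\ast\langle q,q'\rangle=\langle qb,q'\rangle$ for $b\in B$; all are immediate from the definition together with the triviality of the involution on $R$. They give $1=\langle 1\otimes u,1\otimes u\rangle\in M'$ for a unit vector $u\in U$, closure under addition, and $b^\ast\langle q,mq\rangle b=\langle qb,m(qb)\rangle\in M'$, so $M'$ is a quadratic module; taking $m=1$, $q=b\otimes u$ gives $\sos{B}\subseteq M'$. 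For archimedeanness I would bound the algebra generators of $B$ directly. Since $\sos{A}$ is archimedean, $\mu\pm a\in\sos{A}$ yields $\mu\pm(1\otimes a)\in 1\otimes\sos{A}\subseteq M'$; and since $M$ is archimedean, choosing $m_\pm:=(\lambda\pm r)\otimes\id_U\in M$ and $q=(1\otimes c)\otimes u$ gives the key identity $\langle q,m_\pm q\rangle=(\lambda\pm r)\otimes c^\ast c$. With $c=1$ this bounds $r\otimes 1$, and letting $c^\ast c$ run through the square summands of $\mu\mp a$ produces $(\lambda\pm r)\otimes(\mu\mp a)\in M'$; expanding these four products and absorbing the already-bounded terms $1\otimes a$ and $r\otimes 1$ shows every $r\otimes a$, hence every $p\in B_h$, is bounded. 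Thus $M'$ is archimedean and Theorem~\ref{mainthm} applies.

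Next I would check, for a multiplicative state $\phi$ on $R$, the fibre identity $(\phi\otimes\id_A)(\langle q,mq\rangle)=\langle\bar q,\bar m\,\bar q\rangle$, where $\bar m:=(\phi\otimes\id_{\L(U)})(m)\in\L(U)$, $\bar q:=(\phi\otimes\id_A\otimes\id_U)(q)\in A\otimes U$, and the right-hand side is the $A$-valued analogue of the form on $A\otimes U$; this is a direct bookkeeping computation using multiplicativity of $\phi$. Consequently $(\phi\otimes\id_A)(M')$ equals the quadratic module $M_\phi''$ induced in $A$ in the same way from $N_\phi:=(\phi\otimes\id_{\L(U)})(M)$, and always $\sos{A}\subseteq M_\phi''$ (take $\bar m=\id_U$). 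Here the dichotomy enters. If $N_\phi\subseteq\L(U)_+$, then for $\bar m\in N_\phi$ and $\bar q=\sum_i a_i\otimes u_i$ the Gram matrix $G=[\langle u_i,\bar m u_j\rangle_U]$ is positive semidefinite, so $G=C^\ast C$ for a scalar matrix $C$ and $\langle\bar q,\bar m\,\bar q\rangle=\sum_k d_k^\ast d_k$ with $d_k=\sum_i C_{ki}a_i$; hence $M_\phi''=\sos{A}$ exactly. If $N_\phi\not\subseteq\L(U)_+$, pick $\bar m_0\in N_\phi$ and $u_0$ with $\beta:=\langle u_0,\bar m_0u_0\rangle_U<0$; then $\langle a\otimes u_0,\bar m_0(a\otimes u_0)\rangle=\beta\,a^\ast a$ gives $-\sos{A}\subseteq M_\phi''$, and since $\sos{A}$ is archimedean this forces $M_\phi''=A_h$, the exact analogue of Lemma~\ref{compact}.

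Finally I would feed this back into Theorem~\ref{mainthm}. For (1)$\Leftrightarrow$(2), the condition of Theorem~\ref{mainthm} demands $(\phi\otimes\id_A)(p)\in\eps_\phi+(\phi\otimes\id_A)(M')$ for every $\phi$; by the dichotomy this is automatic when $N_\phi\not\subseteq\L(U)_+$ (then $M_\phi''=A_h$) and, when $N_\phi\subseteq\L(U)_+$, reduces via $M_\phi''=\sos{A}$ to $(\phi\otimes\id_A)(p)\in\eps_\phi+\sos{A}$, i.e.\ to $(\phi\otimes\id_A)(p)>0$, which is exactly hypothesis (2). The pair (1$'$)$\Leftrightarrow$(2$'$) is identical with $\overline{\sos{A}}$ and $\ge 0$ replacing $\sos{A}$ and $>0$, using the primed part of Theorem~\ref{mainthm} and Proposition~\ref{propgen2}. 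For (1$''$)$\Leftrightarrow$(2$''$) the same split gives: vacuous when $N_\phi\not\subseteq\L(U)_+$, and otherwise $\sum_i d_i^\ast(\phi\otimes\id_A)(p)d_i\in 1+\sos{A}$. I expect the one genuinely delicate point to be reconciling this with the stated form $\sum_i d_i^\ast(\phi\otimes\id_A)(p)d_i-1\ge 0$ of (2$''$): the two existential statements differ only by passing between $\sos{A}$ and its closure, which I would settle by the rescaling $d_i\mapsto d_i/\sqrt{\eps}$ (turning $\eps+\sos{A}$ into $1+\sos{A}$), or equivalently by applying Proposition~\ref{another} to $A$ with $M_\phi''$ and noting that the $M_\phi''$-positive $\ast$-representations are precisely all $\ast$-representations of $A$. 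The main obstacle overall is the archimedean step: the absence of a general product rule for bounded elements forces the explicit identity $\langle(1\otimes c)\otimes u,m_\pm((1\otimes c)\otimes u)\rangle=(\lambda\pm r)\otimes c^\ast c$ and the term-by-term bounding of $r\otimes a$.
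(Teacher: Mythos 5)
Your proposal is correct and follows essentially the same route as the paper's proof: establish that $M'$ is an archimedean quadratic module, prove the Gram-matrix fibre identity $(\phi\otimes\id_A)(\langle q,mq\rangle)=\langle s,(\phi\otimes\id_{\L(U)})(m)s\rangle$ with positive semidefinite Gram matrix (the paper's Claim 1), prove the dichotomy $(\phi\otimes\id_A)(M')=A_h$ when $(\phi\otimes\id_{\L(U)})(M)\not\subseteq\L(U)_+$ (the paper's Claim 2), and feed the resulting fibrewise conditions into Theorem \ref{mainthm}. Your local deviations are all sound: you bound the generators $r\otimes a$ directly where the paper simply notes $M\cap R\subseteq M'\cap R$ and invokes Lemma \ref{arch}; in Claim 2 you obtain $-\sos{A}\subseteq(\phi\otimes\id_A)(M')$ from one negative diagonal value and absorb via archimedeanness of $\sos{A}$, whereas the paper manufactures $\pm P\in(\phi\otimes\id_{\L(U)})(M)$ and uses $q_\pm=1_R\otimes\frac{1\pm a}{2}\otimes u$ to hit every $a\in A_h$ exactly; and your rescaling $d_i\mapsto d_i/\sqrt{\eps}$ correctly supplies the closure-versus-$\sos{A}$ detail in the $(1'')\Leftrightarrow(2'')$ step that the paper leaves implicit in its remark about replacing assertion (3) by suitable assertions $(3')$ and $(3'')$.
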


We will need the following observation which follows from the fact that the set of bounded elements
w.r.t. a given quadratic module is closed for addition and multiplication of commuting elements.

\begin{lem}
\label{arch}
Let $R$ be a commutative algebra with trivial involution and $A$ an algebraically bounded $\ast$-algebra.
A quadratic module $N$ in $R \otimes A$ is archimedean if and only if $N \cap R$ is archimedean in $R$.
If $x_1,\ldots,x_d$ are generators of $R$, then $N$ is archimedean if and only if it contains
$K^2-x_1^2-\ldots-x_d^2$ for some real $K$.
\end{lem}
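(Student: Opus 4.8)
The plan is to prove both equivalences by analysing the set $H$ of elements of $B_h = R \otimes A_h$ that are bounded with respect to $N$, exploiting the quoted fact that $H$ is closed under addition and under multiplication of commuting hermitian elements. The direction ``$N$ archimedean $\Rightarrow$ $N \cap R$ archimedean'' is immediate: for $r \in R$, viewed as $r \otimes 1 \in B_h$, boundedness gives $l$ with $l \pm r \in N$, and since $l \pm r$ lies in $R$ we get $l \pm r \in N \cap R$. For the converse I would argue that $H = B_h$. Since every element of $R \otimes A_h$ is a finite sum of elementary tensors $r \otimes a = (r \otimes 1)(1 \otimes a)$ with $r \in R$, $a \in A_h$, and since $H$ is closed under addition, it suffices to show each such product lies in $H$.

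Here the two hypotheses enter. Because $N \cap R$ is archimedean, each central element $r \otimes 1$ is bounded with respect to $N$. Because $A$ is algebraically bounded, $\sos{A}$ is archimedean in $A$; as $1 \otimes \sos{A} \subseteq \sos{B} \subseteq N$, boundedness of $a \in A_h$ with respect to $\sos{A}$ transfers to boundedness of $1 \otimes a$ with respect to $N$, via $l \pm (1 \otimes a) = 1 \otimes (l \pm a) \in 1 \otimes \sos{A} \subseteq N$. The key point is that $r \otimes 1$ is central, hence commutes with $1 \otimes a$; both factors being bounded, the quoted closure property yields $r \otimes a = (r \otimes 1)(1 \otimes a) \in H$. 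This gives $H = B_h$, i.e.\ $N$ is archimedean. I expect the main obstacle to be precisely this step: boundedness is not in general preserved under products, so the argument genuinely relies on the centrality of $R \otimes 1$ to place the two bounded factors in the commuting regime covered by the closure property.

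For the affine characterisation I would first reduce, via the equivalence just proved, to showing that $N \cap R$ is archimedean in $R$ if and only if $K^2 - x_1^2 - \ldots - x_d^2 \in N \cap R$ for some real $K$. If $N \cap R$ is archimedean then $x_1^2 + \ldots + x_d^2 \in R$ is bounded, so $l - (x_1^2 + \ldots + x_d^2) \in N \cap R$ for some $l$; choosing $K$ with $K^2 \ge l$ and adding the nonnegative constant $K^2 - l \in N \cap R$ gives the required element. Conversely, assume $K^2 - x_1^2 - \ldots - x_d^2 \in N \cap R$. Adding $\sum_{j \ne i} x_j^2 \in \sos{R} \subseteq N$ yields $K^2 - x_i^2 \in N$ for each $i$, and then the identity
$$\frac{1+K^2}{2} \pm x_i = \frac12\big((1 \pm x_i)^2 + (K^2 - x_i^2)\big) \in N$$
shows that each generator $x_i$ is bounded. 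Since the bounded elements of $R$ form a subring (closure under addition and, $R$ being commutative, under all products, by the quoted fact) containing the constants and all the $x_i$, this subring is all of $R$; hence $N \cap R$ is archimedean and, by the first part, so is $N$.
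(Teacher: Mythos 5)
Your proof is correct and takes essentially the same approach as the paper, which justifies the lemma by exactly the observation you invoke -- that the set of elements bounded with respect to a quadratic module is closed under addition and under multiplication of commuting elements -- applied to the commuting bounded factors $r \otimes 1$ and $1 \otimes a$, and to the generators $x_i$ in the affine case. Your write-up merely fills in the details the paper leaves implicit (the transfer of boundedness from $\Sigma^2(A)$ to $N$, the identity $\frac{1+K^2}{2} \pm x_i = \frac12\bigl((1 \pm x_i)^2 + (K^2 - x_i^2)\bigr)$, and the subring argument), all of which check out.
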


\begin{proof}[Proof of Theorem \ref{avthm}]
To prove that (1) implies (2), it suffices to prove:

\medskip
\textit{Claim 1.} For every multiplicative state $\phi$ on $R$
such that $(\phi \otimes \id_{\L(U)}) (M) \subseteq \L(U)_+$, we have that
$(\phi \otimes \id_A)(M') \subseteq \sos{A}$.
\medskip

For every $q \in B \otimes U$ and $m \in M$ we have that
$(\phi \otimes \id_A)(\langle q,mq \rangle)=\langle s,(\phi \otimes \id_{\L(U)}) (m) s \rangle$
where $s=(\phi \otimes \id_A \otimes \id_U)(q) \in A \otimes U$.
If $s=\sum_{i=1}^k a_i \otimes u_i$, then
$$\langle s,(\phi \otimes \id_{\L(U)}) (m) s \rangle =
\left[ \begin{array}{ccc} 
a_1^\ast & \ldots & a_k^\ast
\end{array} \right]
T
\left[ \begin{array}{c} 
a_1 \\ \vdots \\ a_k
\end{array} \right]
$$ 
where $T=[\langle u_i,(\phi \otimes \id_{\L(U)}) (m) u_j \rangle ]_{i,j=1}^k \in M_k(F)$.
Since $(\phi \otimes \id_{\L(U)}) (m)$ is positive semidefinite for every $m \in M$,
$T$ is also positive semidefinite.

\medskip

To prove that (2) implies (1), consider the following statement:
\begin{enumerate}
\item[(3)] \textit{For every multiplicative state $\phi$ on $R$ there exists a real $\eps_\phi>0$ such that
$(\phi \otimes \id_A)(p) \in \eps_\phi+(\phi \otimes \id_A)(M')$.}
\end{enumerate}
We claim that (2) implies (3) and (3) implies (1).

Clearly, $b^\ast \langle q,mq \rangle b=\langle qb, mqb \rangle \in M'$ for every $m \in M$, $q \in B \otimes U$ and $b \in B$, 
hence the set $M'$ is a quadratic module in $B$. Clearly, $M' \cap R$ is archimedean since it contains $M \cap R$.
By Lemma \ref{arch}, $M'$ is also archimedean. Hence (3) implies (1) by Theorem \ref{mainthm}.

Suppose that (2) is true and pick a multiplicative state $\phi$ on $R$. 
Clearly, $(\phi \otimes \id_{\L(U)})(M)$ is a quadratic module in $\L(U)$
and $(\phi \otimes \id_A)(M')$ is a quadratic module in $A$.
If $(\phi \otimes \id_{\L(U)})(M) \subseteq \L(U)_+$, then
(2) implies that $(\phi \otimes \id_A)(p) \in \eps_\phi+\sos{A} \subseteq \eps_\phi+(\phi \otimes \id_A)(M')$
for some real $\eps_\phi>0$, hence (3) is true. If $(\phi \otimes \id_{\L(U)})(M) \not\subseteq \L(U)_+$
then (3) follows from:

\medskip
\textit{Claim 2.}   For every multiplicative state $\phi$ on $R$
such that $(\phi \otimes \id_{\L(U)})(M) \not\subseteq \L(U)_+$ 
we have that $(\phi \otimes \id_A)(M')=A_h$.
\medskip

We could use Lemma \ref{compact} but we prefer to prove this claim from scratch.
Pick any $C \in (\phi \otimes \id_{\L(U)})(M) \setminus \L(U)_+$.
There exists $u \in U$ of length $1$ such that $\langle u,Cu \rangle <0$.
Write $P$ for the orthogonal projection of $U$ on the span of $\{u\}$.
Clearly, $P^\ast CP=-\lambda P$ for some $\lambda >0$, hence $-P \in (\phi \otimes \id_{\L(U)})(M)$.
Also, $P=P^\ast P \in (\phi \otimes \id_{\L(U)})(M)$.
Let $m_\pm \in M$ be such that $(\phi \otimes \id_{\L(U)}) (m_\pm)=\pm P$.
Pick any $a \in A_h$ and write $q_\pm = 1_R \otimes \frac{1 \pm a}{2} \otimes u$ where $1=1_A$.
The element $m' = \langle q_+, m_+ q_+ \rangle+\langle q_-, m_- q_- \rangle$
belongs to $M'$ and, by the proof of Claim 1,
$(\phi \otimes \id_A)(m')=\langle s_+, (\phi \otimes \id_{\L(U)})(m_+)s_+ \rangle
+ \langle s_-, (\phi \otimes \id_{\L(U)})(m_-)s_- \rangle$
where $s_\pm = (\phi \otimes \id_A \otimes \id_U)(q_\pm)=\frac{1 \pm a}{2} \otimes u$.
Therefore, $(\phi \otimes \id_A)(m')=
(\frac{1 + a}{2})^2 \langle u,Pu \rangle+(\frac{1 - a}{2})^2 \langle u,-Pu \rangle=
(\frac{1 + a}{2})^2-(\frac{1 - a}{2})^2=a$.

Claim 1 also gives implications (1') $\Rightarrow$ (2') and (1'') $\Rightarrow$ (2'') and Claim 2 
gives their converses. Note that assertion (3) must be replaced with suitable assertions (3') and (3'') 
to which Theorem \ref{mainthm} can be applied.
\end{proof}

\medskip

For $U=F^\nu$, we have that $B \otimes U \cong B^\nu \cong M_{\nu \times 1}(B)$, $R \otimes \L(U) \cong M_\nu(R) \subseteq M_\nu(B)$
and $\langle q, mq \rangle =q^\ast m q$ in Theorem \ref{avthm}. However, if also 
$A=M_\nu(F)$ (i.e. $B=M_\nu(R)$), we do not get Theorem \ref{klep-schweighofer}.
Combining both theorems, we get that archimedean quadratic modules $M$ and $M'$ in 
$M_{\nu}(R)$ have the same interior and the same closure.

Finally, we would like to show that Theorem \ref{avthm} implies Theorem \ref{ambrozie-vasilescu}.
The proof also works for algebraically bounded $\ast$-algebras.

\begin{proof}[Proof of Theorem \ref{ambrozie-vasilescu}] Write $\nu=2+\nu_1+\ldots+\nu_m$, $\Vert x \Vert =
\sqrt{x_1^2+\ldots+x_d^2}$ and
$p_0=[1-\Vert x \Vert^2] \oplus [\Vert x \Vert^2-1] \oplus p_1 \oplus \ldots \oplus p_m \in M_\nu(\rx).$
Clearly, $K_0=\{t \in \RR^d \mid p_0(t) \ge 0\}$. Let $M_0$ be the quadratic module in $M_\nu(\rx)$ 
generated by $p_0$. Since $M_0$ contains $(1-\Vert x \Vert^2)I_\nu$, it is archimedean by Lemma
\ref{arch}. By Theorem \ref{avthm} applied to $U=F^\nu$, every element $p \in \rx \otimes A$ which is
strictly positive definite on $K_0$ belongs to $M_0'$. From the definition of $M_0'$, we get that 
$p  =  \sum_{j \in J} \big( s_j^\ast s_j+ q_j^\ast p_0 q_j \big)$ for a finite $J$, 
$s_j \in \rx \otimes A$ and $q_j \in M_{\nu \times 1}(\rx \otimes A)$, hence
\begin{eqnarray*} 
p & = & \sum_{j \in J} \big( s_j^\ast s_j+ w_j^\ast (1-\Vert x \Vert^2) w_j 
+ z_j^\ast (\Vert x \Vert^2-1) z_j+\sum_{k=1}^{m} s_{jk}^\ast p_k s_{jk} \big)
\end{eqnarray*}
for a finite $J$, $s_j,w_j,z_j \in \rx \otimes A$ and $s_{jk} \in M_{\nu_k \times 1}(\rx \otimes A)$. 
Replacing $x$ by $\frac{x}{\Vert x\Vert}$ and multiplying with a large power
of $\Vert x\Vert^2$ we get that 
\begin{eqnarray*}
\Vert x\Vert^{2 \theta} p(x) = \sum_{j \in J} \big( (u_j(x)+\Vert x \Vert v_j(x))^\ast(u_j(x)+\Vert x \Vert v_j(x))+
\\
+\sum_{k=1}^m (u_{jk}(x)+\Vert x \Vert v_{jk}(x))^\ast p_k(x) (u_{jk}(x)+\Vert x \Vert v_{jk}(x)) \big)
\end{eqnarray*}
where $u_j,v_j \in \in \rx \otimes A$ and $u_{jk},v_{jk} \in M_{\nu_k \times 1}(\rx \otimes A)$ for every $j \in J$.
Finally, we can get rid of the terms containing $\Vert x \Vert$ by replacing $\Vert x \Vert$ with $-\Vert x \Vert$ 
and adding the old and the new equation.
\end{proof}

\section{Theorem \ref{dritschel-rovnyak} and moment problems}
\label{finsec}

Our next result, Theorem \ref{sh}, is a special case of Theorem \ref{avthm} for $U=F$.
The proof can be shortened in this case because both claims become trivial.
For $R = \RR[\cos \phi_1, \sin \phi_1,\ldots,\cos \phi_n,\sin \phi_n]$, $M=\sos{R}$ and $A=B(H)$ 
it implies Theorem \ref{dritschel-rovnyak}. 
For $R=\rx$ and $A$ a finite-dimensional $C^\ast$-algebra it implies
\cite[Theorem 2]{hs}, a step in the original proof of Theorem \ref{klep-schweighofer}.
Both special cases can also be obtained from the original proof of 
Theorem \ref{ambrozie-vasilescu} - namely, Theorem 3 and Lemma 5 from \cite{av} 
imply Theorem \ref{sh} for $R=\rx$ and $A$ a $C^\ast$-algebra.

\begin{thm}
\label{sh}
Let $R$ be a commutative real algebra with trivial involution,
$A$ an algebraically bounded $\ast$-algebra over $F \in \{\RR,\CC\}$
and $M$ an archimedean quadratic module in $R$. Write $M'=M \cdot \sos{R \otimes A}$ for the quadratic module in
$R \otimes A$ which consists of all finite sums of elements of the form $m q^\ast q$ with $m \in M$ and $q \in R \otimes A$.
For every element $p \in R\otimes A_h$, the following are equivalent:
\begin{enumerate}
\item $p \in \eps+M'$ for some real $\eps>0$.
\item For every multiplicative state $\phi$ on $R$ such that $\phi(M) \ge 0$,
we have that $(\phi \otimes \id_A)(p) >0$.
\end{enumerate}
\end{thm}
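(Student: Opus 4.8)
The plan is to realize $M'$ as an archimedean quadratic module in $B:=R\otimes A$ and then invoke the equivalence (1) $\Leftrightarrow$ (2) of Theorem \ref{mainthm}, so that everything reduces to analysing the image modules $(\phi\otimes\id_A)(M')$. First I would check that $M'$ is a quadratic module: it contains $1=1\cdot 1^\ast 1$, is closed under addition by construction, and since $R\otimes 1$ is central in $B$ (recall $R$ is commutative with trivial involution) one has $b^\ast(mq^\ast q)b=m(qb)^\ast(qb)\in M'$ for every $b\in B$. Moreover $M\subseteq M'\cap R$, so $M'\cap R$ is an archimedean quadratic module in $R$; as $A$ is algebraically bounded, Lemma \ref{arch} then shows that $M'$ is archimedean in $B$.

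The direction (1) $\Rightarrow$ (2) is the easy one: if $p\in\eps+M'$ and $\phi(M)\ge 0$, then applying the $\ast$-homomorphism $\phi\otimes\id_A$ gives $(\phi\otimes\id_A)(p)\in\eps+(\phi\otimes\id_A)(M')$, and since (as computed below) $(\phi\otimes\id_A)(M')\subseteq\sos{A}$ in this case, we conclude $(\phi\otimes\id_A)(p)\in\eps+\sos{A}$, i.e. $(\phi\otimes\id_A)(p)>0$. For the converse I would verify condition (2) of Theorem \ref{mainthm} for $M'$, namely that for every multiplicative state $\phi$ on $R$ there is $\eps_\phi>0$ with $(\phi\otimes\id_A)(p)\in\eps_\phi+(\phi\otimes\id_A)(M')$, and then let Theorem \ref{mainthm} deliver (1). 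The essential computation is that, since each $m\in M\subseteq R$ acts as the scalar $\phi(m)$, one has $(\phi\otimes\id_A)(mq^\ast q)=\phi(m)\,d^\ast d$ with $d=(\phi\otimes\id_A)(q)\in A$; hence $(\phi\otimes\id_A)(M')$ is exactly the set of finite sums $\sum_j\phi(m_j)\,d_j^\ast d_j$ with $m_j\in M$, $d_j\in A$.

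Now I split into two cases according to the value of $\phi$ on $M$. If $\phi(M)\ge 0$, then every summand $\phi(m_j)d_j^\ast d_j=(\sqrt{\phi(m_j)}\,d_j)^\ast(\sqrt{\phi(m_j)}\,d_j)$ is a square, so $(\phi\otimes\id_A)(M')\subseteq\sos{A}$; conversely, taking $m=1$ and $q=1\otimes a$ gives $\sos{A}\subseteq(\phi\otimes\id_A)(M')$, so the two coincide. Then assumption (2), which says $(\phi\otimes\id_A)(p)>0$, i.e. $(\phi\otimes\id_A)(p)\in\eps_\phi+\sos{A}$ for some $\eps_\phi>0$, supplies precisely the required $\eps_\phi$.

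The only substantive point, which I expect to be the crux, is the complementary case $\phi(M)\not\ge 0$; this is the specialization to $U=F$ of Claim 2 in the proof of Theorem \ref{avthm}. Here hypothesis (2) imposes no constraint on $\phi$, so I must instead show that the image module already absorbs everything, i.e. $(\phi\otimes\id_A)(M')=A_h$, after which any $\eps_\phi$ works because $(\phi\otimes\id_A)(p)-\eps_\phi$ is hermitian. Choose $m_0\in M$ with $\phi(m_0)=-\lambda<0$. Then $-\lambda\,d^\ast d\in(\phi\otimes\id_A)(M')$ for all $d\in A$, while $\sos{A}\subseteq(\phi\otimes\id_A)(M')$ as before. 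For arbitrary $a\in A_h$ the identity $a=\frac14(1+a)^2-\frac14(1-a)^2$ exhibits $a$ as $\bigl(\frac12(1+a)\bigr)^\ast\bigl(\frac12(1+a)\bigr)+(-\lambda)\bigl(\frac{1}{2\sqrt\lambda}(1-a)\bigr)^\ast\bigl(\frac{1}{2\sqrt\lambda}(1-a)\bigr)$, a sum of an element of $\sos{A}$ and an element of the form $-\lambda\,d^\ast d$; hence $a\in(\phi\otimes\id_A)(M')$, giving $A_h\subseteq(\phi\otimes\id_A)(M')$ and thus equality. Combining the two cases establishes condition (2) of Theorem \ref{mainthm} for every multiplicative state $\phi$, and the theorem then yields (1). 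The whole argument is essentially bookkeeping around Theorem \ref{mainthm}, the difference-of-squares trick in the collapsing case being the one place requiring a small idea.
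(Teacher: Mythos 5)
Your proposal is correct and is essentially the paper's own argument: the paper proves Theorem \ref{sh} as the $U=F$ special case of Theorem \ref{avthm}, whose proof follows exactly your scheme --- archimedeanness of $M'$ via $M\subseteq M'\cap R$ and Lemma \ref{arch}, reduction to condition (2) of Theorem \ref{mainthm}, and the case split on $\phi(M)$ (the paper's Claims 1 and 2, which it notes become trivial for $U=F$, with the same identity $a=\bigl(\tfrac{1+a}{2}\bigr)^2-\bigl(\tfrac{1-a}{2}\bigr)^2$ handling the collapsing case). There is nothing to correct.
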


If we combine Theorem \ref{sh} with a suitable version 
of the Riesz representation theorem (namely, Theorem 1 in \cite{jo}) we get the following existence result 
for operator-valued moment problems which extends Theorem 3 and Lemma 5 from \cite{av}.

\begin{thm}
Let $A$ be an algebraically bounded $\ast$-algebra,
$R$ a commutative real algebra and $M$ an archimedean quadratic module on $R$.
For every linear functional $L \colon R \otimes A \to \RR$ such that $L(m q^\ast q) \ge 0$ for every $m \in M$
and $q \in R\otimes A$,
there exists an $A'$-valued nonnegative measure $m$ on $M^\vee$ such that $L(p)=\int_{M^\vee} (dm , p)$ for every $p \in R \otimes A$.
(Note that $p$ defines a function $\phi \mapsto (\phi \otimes \id_A)(p)$ from $M^\vee$ to $A$.)
\end{thm}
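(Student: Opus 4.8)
The plan is to combine Theorem \ref{sh} with the abstract Riesz representation theorem of \cite{jo}. The strategy is to turn $L$ into a positive linear functional on a space of continuous $A$-valued functions on a compact space, and then invoke that representation theorem to produce the measure. First I would set $X := \overline{\extr M^\vee}$, the weak*-closure of the extreme $M$-positive states on $R$. Since $M$ is archimedean, $M^\vee$ is compact (apply Banach--Alaoglu to $V=(M-1)\cap(1-M)$), hence $X$ is compact. Because $R$ is commutative with trivial involution, Proposition \ref{propex} together with the closedness of the set of factorizable functionals shows that every point of $X$ is a multiplicative state $\phi$ on $R$ with $\phi(M)\ge 0$, and conversely every such state lies in $X$; thus $X$ is exactly the compact space of multiplicative $M$-positive states. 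For $p \in R \otimes A$ define $\hat p \in C(X,A)$ by $\hat p(\phi) := (\phi\otimes\id_A)(p)$. Since $p=\sum_i r_i \otimes a_i$ is a finite sum, $\hat p(\phi)=\sum_i \phi(r_i)a_i$ takes values in the finite-dimensional span of the $a_i$ and is weak*-continuous in $\phi$, so $\hat p$ is well defined and continuous.

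The heart of the argument would be to show that $L$ descends to a positive functional along $p\mapsto\hat p$; concretely, that $(\phi\otimes\id_A)(p)\ge 0$ in $A$ for every $\phi\in X$ implies $L(p)\ge 0$, and that $\hat p=0$ implies $L(p)=0$. For positivity, fix $\eps>0$. Then $(\phi\otimes\id_A)(p+\eps)\in\frac{\eps}{2}+\sos{A}$ for every $\phi\in X$, so by Theorem \ref{sh} applied to $M$ and $M'=M\cdot\sos{R\otimes A}$ we obtain $p+\eps\in\eps'+M'$ for some $\eps'>0$. Since $L$ is nonnegative on $M'$ and $1=1\cdot 1^\ast 1\in M'$, this gives $L(p)+\eps L(1)=L(p+\eps)\ge 0$, and letting $\eps\to 0$ yields $L(p)\ge 0$. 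Applying this to $\pm p$ when $\hat p=0$ forces $L(p)=0$, so $L$ factors through a well-defined positive real-linear functional $\Lambda$ on the image $\widehat{R\otimes A}\subseteq C(X,A)$, positivity being with respect to the cone of functions valued in $\overline{\sos{A}}$.

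Next I would record that $\widehat{R\otimes A}$ is large enough to pin down a measure. The image $\hat R\subseteq C(X,\RR)$ is a unital subalgebra (here multiplicativity of the $\phi\in X$ is used) that separates the points of $X$, since two distinct states differ on some $r\in R$; hence by the Stone--Weierstrass theorem $\hat R$ is dense in $C(X,\RR)$, so that $\widehat{R\otimes A}=\hat R\otimes A$ is dense in $C(X,A)$ (for the finest locally convex topology on $A$, a continuous $A$-valued function on the compact $X$ has range in a finite-dimensional subspace, so density reduces to the scalar case tensored with $A$). With $\Lambda$ a positive functional on this dense subspace, I would invoke Theorem 1 of \cite{jo} to extend it to $C(X,A)$ and represent it by an $A'$-valued nonnegative Radon measure $m$ on $X$, i.e. $L(p)=\Lambda(\hat p)=\int_X (dm,\hat p)$. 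Viewing $m$ as a measure on $M^\vee$ supported on $X=\overline{\extr M^\vee}$ gives the stated conclusion; for a general non-hermitian $p$ one splits into hermitian and skew-hermitian parts and extends by real linearity.

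The step I expect to be the main obstacle is matching the hypotheses of the Riesz representation theorem of \cite{jo}: one must verify that $\Lambda$ has the continuity and regularity required there, and that the abstract positive cone in $C(X,A)$ used by that theorem coincides with pointwise positivity $\hat p(\phi)\in\overline{\sos{A}}$. The purely algebraic input---passing from ``$\hat p\ge 0$ pointwise on $X$'' to ``$L(p)\ge 0$''---is handled cleanly by Theorem \ref{sh} together with the $\eps\to 0$ approximation, so the delicate part is the analytic passage from the positive functional $\Lambda$ to a genuine $A'$-valued measure, which is exactly what \cite{jo} supplies.
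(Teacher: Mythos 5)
Your outline is correct and shares the paper's skeleton --- use Theorem \ref{sh} to convert pointwise positivity at multiplicative $M$-positive states into membership in $\overline{M'}$ (hence $L(p)\ge 0$), transport $L$ to a positive functional on continuous $A$-valued functions on a compact space, and invoke Theorem 1 of \cite{jo} --- but you execute the extension step differently, and the obstacle you flag at the end is exactly the point the paper resolves by a different choice. The paper works on all of $M^\vee$ rather than on $X=\overline{\extr M^\vee}$ (so the measure lives on $M^\vee$ verbatim, with no push-forward needed), equips $A$ with the natural $C^\ast$-seminorm induced by the archimedean quadratic module $\sos{A}$ (available precisely because $A$ is algebraically bounded), and then extends $L'$ from $i(R\otimes A)$ to $\mathcal{C}(M^\vee,A)$ by the Riesz \emph{extension} theorem for positive functionals: since $i(1)$ is the constant function $1$ and every $f$ with $\sup_\phi \Vert f(\phi)\Vert \le \lambda$ satisfies $\lambda\cdot 1\pm f\in\mathcal{C}^+(M^\vee,A)$, the subspace is cofinal with respect to the cone, so no density and no continuity argument is required. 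Your Stone--Weierstrass route can be completed, but only after making the paper's topological choice: with the finest locally convex topology on $A$ (your choice), $C(X,A)$ and the cone $\mathcal{C}(X,\overline{\sos{A}})$ are not the objects treated in \cite{jo}, whereas with the order-unit seminorm $\Vert a\Vert=\inf\{\lambda>0: \lambda\pm a\in\sos{A}\}$ positivity of $\Lambda$ yields the missing continuity automatically, from $\Lambda(\Vert\hat p\Vert\cdot 1\pm\hat p)\ge 0$; note also that Theorem 1 of \cite{jo} is a representation theorem for functionals already defined on all of $C(S,F)$, so it cannot by itself perform the extension from your dense subspace --- you need either the continuity argument just indicated or the paper's Riesz-extension shortcut. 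Two smaller points: your identification of $X$ with the set of multiplicative $M$-positive states uses the converse of Proposition \ref{propex} (multiplicative states are extreme, a standard Cauchy--Schwarz argument not proved in the paper), though you could sidestep it by defining $X$ directly as the multiplicative $M$-positive states, which form a closed subset of the compact $M^\vee$; and your well-definedness argument ($\hat p=0\Rightarrow L(p)=0$) as stated covers only hermitian $p$ --- for the antihermitian part $iq$ with $q$ hermitian and $\hat q=0$ one gets $L(q^2)=0$ from Theorem \ref{sh} and then $L(iq)=0$ by Cauchy--Schwarz for the $\sos{R\otimes A}$-positive functional $L$ (a gap the paper's ``defines in the natural way'' glosses over as well).
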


\begin{proof}
Recall that the set $M^\vee$ is compact in the weak*-topology. We assume that $A$ is equipped 
with its natural $C^\ast$-seminorm induced by the archimedean quadratic module $\sos{A}$, see \cite[Section 3]{c1}, 
hence it is a locally convex $\ast$-algebra. We will write $\mathcal{C}^+(M^\vee,A):=\mathcal{C}(M^\vee,\overline{\sos{A}})$
for the positive cone of $\mathcal{C}(M^\vee,A)$.
Let $i$ be the mapping from $R \otimes A$ to $\mathcal{C}(M^\vee,A)$ defined by
$i(p)(\phi)=(\phi \otimes \id_A)(p)$ for every $p \in R \otimes A$ and $\phi \in M^\vee$.
By Theorem \ref{sh}, we have that $\mathcal{C}^+(M^\vee,A) \cap i(R \otimes A)=i(\overline{M'})$
where $M'=M \cdot \sos{R \otimes A}$. 
Note that $L$ is a $\overline{M'}$-positive functional on $R \otimes A$ and that is defines in the natural way
an $i(\overline{M'})$-positive functional $L'$ on $i(R \otimes A)$. By the Riesz  extension theorem for positive functionals, 
$L'$ extends to a $\mathcal{C}^+(M^\vee,A)$-positive functional on $\mathcal{C}(M^\vee,A)$ which has the
required integral representation by Theorem 1 in \cite{jo}. Hence $L$ also has the required integral representation.
\end{proof}

Finally, we would like to prove a nichtnegativsemidefinitheitsstellensatz that corresponds to Theorem \ref{sh}.

\begin{thm}
\label{another2}
Let $H$ be a separable infinite-dimensional complex Hilbert space and $R$ a commutative real algebra with trivial involution.
Let $M$ be an archimedean quadratic module in $R$ and $M'=M \cdot \sos{R \otimes B(H)}$. For every $p \in R \otimes B(H)_h$,
the following are equivalent:
\begin{enumerate}
\item There are finitely many $c_i \in R \otimes B(H)$ such that $\sum_i c_i^\ast p c_i \in 1+M'$.
\item For every multiplicative state $\phi$ on $R$ such that $\phi(M) \ge 0$, 
the operator $(\phi \otimes \id_{B(H)})(p)$ is not the sum of a negative semidefinite and a compact operator.
\end{enumerate}
\end{thm}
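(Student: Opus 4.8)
The plan is to reduce the theorem to the abstract equivalence (1'') $\Leftrightarrow$ (2'') of Theorem \ref{mainthm}, applied to the $\ast$-algebra $A = B(H)$, to $B = R \otimes B(H)$, and to the quadratic module $M'$, and then to make the resulting condition (2'') concrete. First I would check that $M' = M \cdot \sos{R \otimes B(H)}$ really is an archimedean quadratic module in $R \otimes B(H)$: it contains $1$ (as $1 \in M$), it is closed under $b^\ast(\,\cdot\,)b$ by its $q^\ast q$-structure, and since $M \subseteq M' \cap R$ and $B(H)$ is algebraically bounded, Lemma \ref{arch} gives archimedeanness. With this, statement (1) of the theorem is literally assertion (1'') of Theorem \ref{mainthm}, so Theorem \ref{mainthm} yields that (1) is equivalent to: (2'') for every multiplicative state $\phi$ on $R$ there are finitely many $d_i \in B(H)$ with $\sum_i d_i^\ast (\phi \otimes \id_{B(H)})(p)\, d_i \in 1 + (\phi \otimes \id_{B(H)})(M')$. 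It then remains to prove (2'') $\Leftrightarrow$ (2).

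The key is to describe the image $(\phi \otimes \id_{B(H)})(M')$ for each $\phi$. Because the elements of $M$ lie in $R$, the $\ast$-homomorphism $\phi \otimes \id_{B(H)}$ sends a generator $m\,q^\ast q$ to the scalar multiple $\phi(m)\, s^\ast s$ with $s = (\phi \otimes \id_{B(H)})(q) \in B(H)$. Two cases arise. If $\phi(M) \ge 0$, every such image is positive semidefinite, and taking $m=1$ yields all of $P(H)$; hence $(\phi \otimes \id_{B(H)})(M') = P(H)$. If instead $\phi(m_0) < 0$ for some $m_0 \in M$, then $-P(H) \subseteq (\phi \otimes \id_{B(H)})(M')$, and together with $P(H) \subseteq (\phi \otimes \id_{B(H)})(M')$ this forces $(\phi \otimes \id_{B(H)})(M') = B(H)_h$, since every hermitian operator is a difference of two positive ones. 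In this second case the membership demanded by (2'') is automatic (take $d_i = 0$), so (2'') reduces to its assertion over exactly those $\phi$ with $\phi(M) \ge 0$ — precisely the range of quantification in (2). For such $\phi$, writing $C = (\phi \otimes \id_{B(H)})(p)$, condition (2'') becomes: there exist $d_i \in B(H)$ with $\sum_i d_i^\ast C d_i \ge 1$. (This first case can alternatively be subsumed under Lemma \ref{compact}, but the product structure of $M'$ makes the direct computation shorter.)

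Thus the whole theorem comes down to the concrete lemma: for $H$ separable infinite-dimensional and $C \in B(H)_h$, there exist finitely many $d_i$ with $\sum_i d_i^\ast C d_i \in 1 + P(H)$ if and only if $C$ is not the sum of a negative semidefinite and a compact operator. I would prove the easy direction by contradiction: if $C = N + K$ with $N \le 0$ and $K$ compact, then $\sum_i d_i^\ast C d_i \le \sum_i d_i^\ast K d_i$ is dominated by a compact operator, so it cannot dominate $I$ (an identity is not bounded above by a compact operator in infinite dimensions, as $\langle K' e_n, e_n\rangle \to 0$ for compact $K'$ along any orthonormal sequence). The converse is the main obstacle and is where spectral theory enters: $C$ is of the excluded form precisely when its image in the Calkin algebra is $\le 0$, i.e. $\sigma_{\mathrm{ess}}(C) \subseteq (-\infty,0]$, so if $C$ is not of that form then $\sigma_{\mathrm{ess}}(C)$ meets $(0,\infty)$, say at $\lambda_0 > 0$. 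Then the spectral projection $P = E_{(\lambda_0/2,\infty)}(C)$ has infinite rank and satisfies $PCP \ge \tfrac{\lambda_0}{2} P$; using separability I would pick an isometry $V \in B(H)$ with $V^\ast V = 1$ and $VV^\ast = P$, so that $V^\ast C V = V^\ast (PCP) V \ge \tfrac{\lambda_0}{2}$, and a single $d = \sqrt{2/\lambda_0}\,V$ gives $d^\ast C d \ge 1$. Assembling the three ingredients — archimedeanness plus Theorem \ref{mainthm}, the case analysis of $(\phi \otimes \id_{B(H)})(M')$, and the concrete lemma — yields (1) $\Leftrightarrow$ (2).
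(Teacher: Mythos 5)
Your proposal is correct and follows essentially the same route as the paper: the paper reduces assertion (1) to the pointwise condition via the equivalence (1'') $\Leftrightarrow$ (2'') of Theorem \ref{avthm} with $U=\CC$, $A=B(H)$, whose proof is exactly your direct application of Theorem \ref{mainthm} together with your two-case computation of $(\phi \otimes \id_{B(H)})(M')$ (the paper's Claims 1 and 2), and then establishes the same concrete operator lemma. Your Calkin-algebra/essential-spectrum phrasing of that lemma, producing $d^\ast C d \ge 1$ from an infinite-rank spectral projection and an isometry, is only a cosmetic variant of the paper's chain (i) $\Rightarrow$ (ii) $\Rightarrow$ (iii) $\Rightarrow$ (iv), which uses non-compactness of the positive part and the projection $E_\gamma$ to build $D$ with $D^\ast C D = 1$ exactly.
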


Note that for finite-dimensional $H$, (1) is equivalent to the following:
For every multiplicative state $\phi$ on $R$ such that $\phi(M) \ge 0$, 
the operator $(\phi \otimes \id_{B(H)})(p)$ is not negative semidefinite;
cf. Theorem \ref{another1}. 

\begin{proof}
The equivalence (1'') $\Leftrightarrow$ (2'') of Theorem \ref{avthm} (with $U=\CC$ and $A=B(H)$) 
says that our assertion (1) is equivalent to the following:

\medskip
(3) \textit{For every multiplicative state $\phi$ on $R$ such that $\phi(M) \ge 0$,
there exist finitely many operators $D_i \in B(H)$ such that $\sum_i D_i^\ast (\phi \otimes \id_A)(p) D_i \in 1+P(H)$.}
\medskip

Therefore it suffices to prove the following claim:

\medskip
\textit{Claim.} For every operator $C \in B(H)_h$, the following are equivalent:
\begin{enumerate}
\renewcommand{\labelenumi}{(\roman{enumi})}
\item $C$ is not the sum of a negative semidefinite and a compact operator,
\item the positive part of $C$ is not compact,
\item there exists an operator $D$ such that $D^\ast C D=1$,
\item there exist finitely many  $D_i \in B(H)$ such that $\sum_i D_i^\ast C D_i \in 1+P(H)$.
\end{enumerate}
The implications (i) $\Rightarrow$ (ii), (iii) $\Rightarrow$ (iv) and (iv) $\Rightarrow$ (i)
are clear. To prove that (ii) implies (iii) we first note that $C_+ := E_0 C=E_0^\ast C E_0$ where
$E_0$ is the spectral projection belonging to the interval $[0,\infty)$.
Since $C_+$ is not compact, there exists by 
the spectral theorem a real number $\gamma>0$ such that the spectral projection $E_\gamma$
belonging to the interval $[\gamma,\infty)$ has infinite-dimensional range. The operator
$C_\gamma:=E_\gamma C_+=E_\gamma^\ast C E_\gamma$ has decomposition $C_\gamma=\tilde{C}_\gamma \oplus 0$
with respect to $H=E_\gamma H \oplus (1-E_\gamma)H$ where $\tilde{C}_\gamma \ge \gamma$. Write 
$F=(\tilde{C}_\gamma)^{-1/2} \oplus 0$ and note that $(E_\gamma F)^\ast C (E_\gamma F)=1 \oplus 0$.
Since $E_\gamma H$ is infinite-dimensional, it is isometric to $H$. If $G$ is an isometry from $H$ onto $E_\gamma H$
then $D:=E_\gamma F G$ satisfies (iii).
\end{proof}


\begin{thebibliography}{99}
\bibitem{av} C.-G. Ambrozie, F.-H.   Vasilescu,           
Operator-theoretic Positivstellensätze, 
Z. Anal. Anwend. \textbf{22} (2003), No. 2, 299--314.
                              

\bibitem{bw} R. Berr, T. W\" ormann, 
Positive polynomials on compact sets.  
Manuscripta Math.  \textbf{104}  (2001),  no. 2, 135--143.

\bibitem{bw2} R. Berr, T. W\" ormann, 
Positive polynomials and tame preorderings. 
Math. Z. \textbf{236} (2001), no. 4, 813–-840.

\bibitem{c1} J. Cimpri\v c, A representation theorem for archimedean quadratic modules on *-rings. Can. Math. Bull. \textbf{52} (2009), 39--52.

\bibitem{c2} J. Cimpri\v c, Maximal quadratic modules on $\ast$-rings. Algebr. Represent. Theory \textbf{11} (2008), no. 1, 83–-91.

\bibitem{cmn} J. Cimpri\v c, M. Marshall, T. Netzer, Closures of quadratic modules, To appear in Trans. Amer. Math. Soc.

\bibitem{dr} M. A. Dritschel, J. Rovnyak, The operator Fejer-Riesz theorem, arXiv: 0903.3639v1.

\bibitem{han} D. Handelman, 
Rings with involution as partially ordered abelian groups,
Rocky Mountain J. Math.  \textbf{11}  (1981), no. 3, 337--381.

\bibitem{jac} T. Jacobi, 
A representation theorem for certain partially ordered commutative rings.  
Math. Z.  \textbf{237}  (2001),  no. 2, 259--273. 


\bibitem{jo} G. W. Johnson,
The dual of $C(S,F)$,
Math. Ann. \textbf{187} (1970), 1--8.

                              

\bibitem{ks} I. Klep, M. Schweighofer, 
Pure states, positive matrix polynomials and sums of hermitian squares, 
arXiv:0907.2260

\bibitem{kr} J.-L. Krivine,  
Anneaux pr\' eordonn\' es. 
J. Analyse Math.  \textbf{12} (1964) 307--326.

\bibitem{mm} M. Marshall, 
Positive polynomials and sums of squares. 
Mathematical Surveys and Monographs, 146. American Mathematical Society, Providence, RI, 2008. xii+187 pp. 
ISBN: 978-0-8218-4402-1; 0-8218-4402-4

\bibitem{mm2} M. Marshall, 
A general representation theorem for partially ordered commutative rings. 
Math. Z. \textbf{242} (2002), no. 2, 217-–225. 

\bibitem{ns} A. Naftalevich, B. Schreiber, 
Trigonometric polynomials and sums of squares. 
Number theory (New York, 1983–-84), 225–-238, 
Lecture Notes in Math., 1135, Springer, Berlin, 1985.


\bibitem{pu} M. Putinar, 
Positive polynomials on compact semi-algebraic sets. 
Indiana Univ. Math. J. \textbf{42} (1993), no. 3, 969--984. 

\bibitem{pv} M. Putinar,  F.-H. Vasilescu, 
Solving moment problems by dimensional extension. 
Ann. of Math. (2) \textbf{149} (1999), no. 3, 1087–-1107. 

\bibitem{rez} B. Reznick,
Uniform denominators in Hilbert's seventeenth problem. (English)
Math. Z. \textbf{220} (1995), no. 1, 75--97 .

\bibitem{hs} C. W. Scherer, C. W. J. Hol,  
Matrix sum-of-squares relaxations for robust semi-definite programs.  
Math. Program.  \textbf{107}  (2006),  no. 1-2, Ser. B, 189--211.

\bibitem{sch} K. Schm\" udgen, 
The $K$-moment problem for compact semi-algebraic sets.
Math. Ann. \textbf{289} (1991), no. 2, 203--206. 

\bibitem{sch2} K. Schm\" udgen, 
Noncommutative real algebraic geometry---some basic concepts and first ideas.  
Emerging applications of algebraic geometry,  325--350, IMA Vol. Math. Appl., 149, Springer, New York, 2009.

\bibitem{st} G. Stengle,  
Nullstellensatz and a positivstellensatz in semialgebraic geometry.  Math. Ann.  \textbf{207}  (1974), 87--97.

\bibitem{tak} M. Takesaki, Theory of Operator Algebras I.
Springer-Verlag, New York-Heidelberg, 1979. vii+415 pp. ISBN: 0-387-90391-7

\bibitem{vidav} I. Vidav, 
On some $\ast$-regular rings, 
Acad. Serbe Sci. Pubi. Inst. Math. \textbf{13} (1959), 73--80.


\end{thebibliography}
\end{document}